\newcommand{\R}{\mathbb{R}}
\newcommand{\N}{\mathbb{N}}
\newcommand{\g}{\gamma}
\providecommand{\keywords}[1]{\textit{\textit{Key words:}} #1}
\providecommand{\subjclass}[1]{\textit{\textit{2010 Mathematics Subject Classification:}} #1}
\newtheorem{theorem}{Theorem}
\newtheorem{proposition}{Proposition}
\newtheorem{lemma}{Lemma}
\theoremstyle{remark}
\newtheorem{remark}{Remark}
\theoremstyle{definition}
\newtheorem{definition}{Definition}
\theoremstyle{corollary}
\newtheorem{corollary}{Corollary}
\begin{document}

\title{\bf Spectral stability of periodic waves \\in the generalized reduced Ostrovsky equation}

\author{Anna Geyer$^{1}$ and Dmitry E. Pelinovsky$^{2}$ \vspace{1em} \\
{\small $^{1}$ Faculty of Mathematics, University of Vienna, Oskar-Morgenstern-Platz 1, 1090 Vienna, Austria} \\
{\small E-mail: anna.geyer@univie.ac.at} \\
{\small $^{2}$ Department of Mathematics, McMaster University, Hamilton, Ontario, Canada, L8S 4K1}  \\
{\small E-mail: dmpeli@math.mcmaster.ca} }

\date{\today}
\maketitle

\begin{abstract}
We consider stability of periodic travelling waves in the generalized reduced Ostrovsky equation
with respect to co-periodic perturbations. Compared to the recent literature,
we give a simple argument that proves spectral stability of all smooth periodic travelling waves
independent of the nonlinearity power. The argument is based on the energy convexity
and does not use coordinate transformations of the reduced Ostrovsky equations to the semi-linear
equations of the Klein--Gordon type.
\end{abstract}

\keywords{reduced Ostrovsky equations; stability of periodic waves; energy-to-period map; negative index theory}

\subjclass{35B35, 35G30}

\section{Introduction}
\label{sec:intro}

We address the generalized reduced Ostrovsky equation written in the form
\begin{equation}
\label{redOst}
(u_t + u^p u_x)_x = u,
\end{equation}
where $p \in \mathbb{N}$ is the nonlinearity power and $u$ is a real-valued function of $(x,t)$.
This equation was derived in the context of long surface and internal gravity waves in a rotating fluid
for $p = 1$ \cite{Ostrov} and $p = 2$ \cite{Grimshaw}. These two cases are the only cases, for which
the reduced Ostrovsky equation is transformed to integrable semi-linear equations of the Klein--Gordon type
by means of a change of coordinates \cite{JG,GH}.

We consider existence and stability of travelling periodic waves in
the generalized reduced Ostrovsky equation (\ref{redOst}) for any $p \in \mathbb{N}$.
The travelling $2T$-periodic waves are given by
$u(x,t) = U(x-ct)$, where $c > 0$ is the wave speed,
$U$ is the wave profile satisfying the boundary-value problem
\begin{equation}
\label{E ODE}
\frac{d}{dz} \left[ (c - U^p) \frac{dU}{dz} \right] + U(z) = 0, \quad U(-T) = U(T), \quad U'(-T) = U'(T),
\end{equation}
and $z = x-ct$ is the travelling wave coordinate. We are looking
for smooth periodic waves $U \in H^{\infty}_{\rm per}(-T,T)$ satisfying (\ref{E ODE}).
It is straightforward to check that periodic solutions of the second-order
equation (\ref{E ODE}) correspond to  level curves of the first-order invariant,
\begin{equation}
\label{first-order}
E = \frac{1}{2} (c - U^p)^2 \left( \frac{dU}{dz} \right)^2 + \frac{c}{2} U^2 - \frac{1}{p+2} U^{p+2} = {\rm const}.
\end{equation}

We add a {\em co-periodic} perturbation to the travelling wave, that is,
a perturbation with the same period $2T$.
Separating the variables, the spectral stability problem for the perturbation $v$ to $U$
is given by $\lambda v = \partial_z L v$, where
\begin{equation}
\label{operator-L}
L = P_0 \left( \partial_z^{-2} + c - U(z)^p \right) P_0 : \; \dot{L}^2_{\rm per}(-T,T) \to \dot{L}^2_{\rm per}(-T,T),
\end{equation}
where $\dot{L}^2_{\rm per}(-T,T)$ denotes the space of $2T$-periodic, square-integrable functions
with zero mean and $P_0 : L^2_{\rm per}(-T,T) \to \dot{L}^2_{\rm per}(-T,T)$ is the projection operator
that removes the mean value of $2T$-periodic functions.

\begin{definition}
\label{def-stability}
We say that the travelling wave is spectrally stable
with respect to co-periodic perturbations if the spectral problem
$\lambda v = \partial_z L v$ with $v \in \dot{H}^1_{\rm per}(-T,T)$
has no eigenvalues $\lambda \notin i \mathbb{R}$.
\end{definition}

Local solutions of the Cauchy problem associated with the generalized reduced Ostrovsky equation (\ref{redOst})
exist in the space $\dot{H}^s_{\rm per}(-T,T)$ for $s > \frac{3}{2}$ \cite{SSK10}.
For sufficiently large initial data, the local solutions break in finite time,
similar to the inviscid Burgers equation \cite{LPS1,LPS2}. However, if the initial data $u_0$ is small in a suitable norm,
then local solutions are continued for all times in the same space, at least in the integrable
cases $p = 1$ \cite{GP} and $p = 2$ \cite{PelSak}.

Travelling periodic waves to the generalized reduced Ostrovsky equation (\ref{redOst})
were recently considered in the cases $p = 1$ and $p = 2$.
In these cases, travelling waves can be found in the explicit form given by the Jacobi elliptic functions
after a change of coordinates \cite{JG,GH}. Exploring this idea further,
it was shown in  \cite{Hakkaev1,Hakkaev2,Stefanov} that the spectral stability of travelling periodic waves can be
studied with the help of the eigenvalue problem $M \psi = \lambda \partial_z \psi$, where $M$ is a second-order Schr\"{o}dinger
operator.
Independently, by using higher-order conserved quantities
which exist in the integrable cases $p = 1$ and $p = 2$,
it was shown in \cite{JP} that the travelling periodic waves are unconstrained
minimizers of energy functions in suitable function spaces with respect to {\em subharmonic} perturbations,
that is, perturbations with a multiple period to the periodic waves. This result yields not only spectral but
also nonlinear stability of the travelling wave. The nonlinear stability of periodic waves
was established analytically for small-amplitude waves and shown numerically for waves of arbitrary amplitude \cite{JP}.

In this paper, we give a simple argument that proves spectral stability of all smooth periodic travelling waves
to the generalized reduced Ostrovsky equation (\ref{redOst}) independently of the nonlinearity power $p$ and
the wave amplitude.
The spectral stability of periodic waves is defined here with respect
to co-periodic perturbations in the sense of Definition \ref{def-stability}. The argument is based on convexity of the energy function
\begin{equation}
\label{energy}
H(u) = -\frac{1}{2} \| \partial_x^{-1} u \|_{L^2_{\rm per}}^2 - \frac{1}{(p+1)(p+2)} \int_{-T}^T u^{p+2} dx,
\end{equation}
at the travelling wave profile $U$ in the energy space with fixed momentum,
\begin{equation}
\label{momentum-space}
X_q = \left\{ u \in \dot{L}^2_{\rm per}(-T,T) \cap L^{p+2}_{\rm per}(-T,T) : \quad \| u \|^2_{L^2_{\rm per}} = 2 q > 0 \right\}.
\end{equation}
Note that the self-adjoint operator $L$ given by (\ref{operator-L}) is the Hessian operator of
the extended energy function $F(u)=H(u) + c Q(u)$, where
\begin{equation}
\label{momentum}
Q(u) = \frac{1}{2} \| u \|^2_{L^2_{\rm per}}
\end{equation}
is the momentum function. The energy $H(u)$ and momentum $Q(u)$,
and therefore the extended energy $F(u)$, are constants of motion,
as can be seen readily by writing the evolution equation \eqref{redOst} in Hamiltonian form
as $u_t = \partial_x  {\rm grad} H(u)$.  Notice that the traveling wave profile $U$
is a critical point of the extended energy function $F(u)$ in the sense that
the Euler--Lagrange equations for $F(u)$ are identical to the boundary-value problem
\eqref{E ODE} after the second-order equation is integrated twice with zero mean.

The outline of the paper is as follows.
Adopting the approach from \cite{Vill1,Vill2,Grau2011}, we prove in Section \ref{section-monoton} that the energy-to-period map
$E \mapsto 2T$ is strictly monotonically decreasing for the family of smooth periodic solutions
satisfying (\ref{E ODE}) and (\ref{first-order}). This result holds for every fixed $c > 0$.
Thanks to monotonicity of the energy-to-period map $E \mapsto 2T$, the inverse mapping defines
the first-order invariant $E$ in terms of the half-period $T$ and the speed $c$. We denote
this inverse mapping by $E(T,c)$.

In Section \ref{section-speed}, we consider continuations of the family of smooth periodic solutions
with respect to parameter $c$ for every fixed $T > 0$ and prove that $E(T,c)$ is an increasing function
of $c$ within a nonempty interval $(c_0(T),c_1(T))$, where $0 < c_0(T) < c_1(T) < \infty$.
We also prove that the momentum $Q(u)$ evaluated at $u = U$ is an increasing function of $c$
for every fixed $T > 0$.

In Section \ref{section-negative}, we use the monotonicity of the mapping $E \mapsto 2T$ for every fixed $c > 0$
and prove that the self-adjoint operator $L$ given by (\ref{operator-L}) has a simple negative eigenvalue,
a one-dimensional kernel, and the rest of its spectrum is bounded from below by a positive number.

Finally, in Section \ref{section-Krein}, we prove that the operator $L$ constrained on the space
\begin{equation}
\label{constraint}
L^2_c = \left\{ u \in \dot{L}^2_{\rm per}(-T,T) : \quad \langle U, u \rangle_{L^2_{\rm per}} = 0 \right\}
\end{equation}
is strictly positive except for the one-dimensional kernel induced by the translational symmetry.
This gives convexity of $H(u)$ at $u = U$ in space of fixed $Q(u)$ given by (\ref{momentum-space}).
By using the standard Hamilton--Krein theorem in \cite{Haragus} (see also the reviews in \cite{Kollar2014}
and \cite{Pel}), this rules out existence of eigenvalues $\lambda \notin i \mathbb{R}$
of the spectral problem $\lambda v = \partial_z L v$ with $v \in \dot{H}^1_{\rm per}(-T,T)$.

All together, the existence and spectral stability of smooth periodic travelling waves
of the generalized reduced Ostrovsky equation (\ref{redOst}) is summarized in the following theorem.

\vspace{2cm}

\begin{theorem}
\label{theorem-main}
For every $c > 0$ and $p \in \mathbb{N}$,
\begin{itemize}
\item[(a)]  there exists a smooth family of periodic solutions $U \in \dot{L}^2_{\rm per}(-T,T) \cap
H^{\infty}_{\rm per}(-T,T)$  of equation (\ref{E ODE}),
parameterized by the energy $E$ given in  (\ref{first-order}) for $E \in (0,E_c)$, with
\begin{equation*}
E_c = \frac{p}{2(p+2)} c^{\frac{p+2}{p}},
\end{equation*}
such that the energy-to-period map $E \mapsto 2T$ is smooth and strictly monotonically decreasing.
Moreover, there exists $T_1 \in (0,\pi)$ such that
\begin{equation*}
T \to \pi c^{\frac{1}{2}} \quad \mbox{\rm as} \quad E \to 0 \quad \mbox{\rm and} \quad T \to T_1 c^{\frac{1}{2}} \quad \mbox{\rm as} \quad E \to E_c;
\end{equation*}

\item[(b)] for each point $U$ of the family of periodic solutions, the operator $L$ given by (\ref{operator-L})
has a simple negative eigenvalue, a simple zero eigenvalue associated with ${\rm Ker}(L) = {\rm span}\{\partial_z U \}$,
and the rest of the spectrum is positive and  bounded away from zero;

\item[(c)] the spectral problem $\lambda v = \partial_z Lv$ with $v \in \dot{H}^1_{\rm per}(-T,T)$
admits no eigenvalues $\lambda \notin i \mathbb{R}$.
\end{itemize}
Consequently, periodic waves of the generalized reduced Ostrovsky equation (\ref{redOst})
are spectrally stable with respect to co-periodic perturbations in the sense of Definition \ref{def-stability}.
\end{theorem}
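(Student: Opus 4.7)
The plan is to establish parts (a), (b), (c) in the order set by the section outline, after which spectral stability will follow from (b) and (c) via the Hamilton--Krein index theorem. For part (a), I would analyze the first-order invariant (\ref{first-order}) as a singular conservative planar system with potential $V(U) = \frac{c}{2} U^2 - \frac{1}{p+2} U^{p+2}$ and positive weight $(c - U^p)^2$ that degenerates on the lines $U = \pm c^{1/p}$. The potential has a non-degenerate minimum at $U=0$ with $V(0)=0$ and a non-degenerate maximum at $U = c^{1/p}$ with critical value exactly $E_c$, so for every $E \in (0,E_c)$ the level set is a smooth closed orbit contained in $\{|U| < c^{1/p}\}$, yielding the smooth family. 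The limit $T \to \pi c^{1/2}$ as $E \to 0$ comes from linearization at the origin, where (\ref{E ODE}) reduces to $cU''+U=0$ with period $2\pi c^{1/2}$, while $T \to T_1 c^{1/2}$ as $E \to E_c$ follows from the rescaling $U = c^{1/p} W$, which removes $c$ from the ODE and yields a $p$-dependent separatrix period. The main technical step is the strict monotonicity $T'(E) < 0$; here I would follow the Chebyshev-type approach of Villadelprat and collaborators \cite{Vill1,Vill2,Grau2011}, writing $2T$ as an Abelian-like integral along level curves and verifying a suitable sign condition after differentiation in $E$.

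For part (b), the operator $L$ in (\ref{operator-L}) is self-adjoint with compact resolvent on $\dot{L}^2_{\rm per}(-T,T)$, so its spectrum is a discrete sequence accumulating only at $+\infty$. Translation invariance of $F = H + cQ$ gives $\partial_z U \in \ker L$; simplicity of this kernel is equivalent to non-degeneracy of the orbit at fixed $c$, since any second kernel element would yield a period-preserving infinitesimal deformation and thus force $T'(E) = 0$, contradicting part (a). The count $n(L)=1$ then follows from a Sturm-type oscillation argument combined with the monotonicity from part (a): since $\partial_z U$ has exactly two simple zeros per period (at the turning points of $U$), the zero eigenvalue occupies the second slot in the ordered spectrum, leaving exactly one simple negative eigenvalue below it and a spectral gap separating the rest of the spectrum from zero.

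For part (c), I apply the Hamilton--Krein index theorem \cite{Haragus,Kollar2014,Pel} in the form $k_{\rm Ham} = n(L) - n(D)$ for the number of eigenvalues of $\lambda v = \partial_z L v$ with positive real part, where $D$ encodes the slopes of the conserved momentum along the family. Here $D$ reduces to the scalar $\frac{d}{dc} Q(U(\cdot;c,T))$ at fixed $T$, which is positive by Section~\ref{section-speed}; combined with $n(L)=1$ from part (b), this gives $k_{\rm Ham} = 0$, ruling out all eigenvalues with nonzero real part and establishing spectral stability. Equivalently, as prepared in Section~\ref{section-Krein}, $L$ restricted to $L^2_c$ is non-negative with kernel spanned by $\partial_z U$, which is the convexity of $H$ at $U$ on the momentum level set $X_q$. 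The main obstacle I anticipate is the bookkeeping near zero: verifying that the translational mode $\partial_z U$ does not generate additional zero eigenvalues of the constrained problem and that the mean-zero projections $P_0$ in (\ref{operator-L}) together with the momentum constraint (\ref{constraint}) fit the standard Hamilton--Krein framework without extra subtleties.
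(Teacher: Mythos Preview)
Your outline for parts (a) and (c) matches the paper's route closely: the monotonicity $T'(E)<0$ is indeed obtained by the Abelian-integral/Chebyshev method of \cite{Vill1,Vill2,Grau2011}, and the stability count in (c) is exactly the Hamilton--Krein computation $n(L)-n_{\rm constraint}=1-1=0$, with the constraint handled via $\partial_c U$ and the sign of $\frac{d}{dc}Q(U)$.

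The gap is in part (b). Two issues. First, $L=P_0(\partial_z^{-2}+c-U^p)P_0$ is a \emph{bounded} self-adjoint operator on $\dot L^2_{\rm per}$: the nonlocal part $\partial_z^{-2}$ is compact and the multiplication by $c-U^p$ is bounded, so $L$ does \emph{not} have compact resolvent. Its essential spectrum is the range $[C_-(E),C_+(E)]$ of $c-U^p$, and only below $C_-(E)$ does one get isolated eigenvalues (this is what the paper establishes via a Birman--Schwinger reformulation). Second, and more seriously, the Sturm oscillation count you invoke is not available here. The eigenvalue equation $Lv=\lambda v$, after two differentiations, becomes $v+\partial_z^2\big[(c-U^p-\lambda)v\big]=0$; this is a second-order ODE, but $\lambda$ enters through the coefficient rather than as a Sturm--Liouville spectral parameter, so the classical ``number of zeros equals eigenvalue index'' principle does not apply. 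Knowing that $\partial_z U$ has exactly two zeros per period therefore does not, by itself, pin the zero eigenvalue to the second slot.

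The paper circumvents this by a continuity argument: at $E=0$ the spectrum of $L_0$ is explicit (a double zero eigenvalue and the rest strictly positive), and along the smooth family the zero eigenvalue stays simple because $T'(E)\neq 0$ prevents a second periodic solution of the linearized ODE. A variational computation $\langle LU,U\rangle<0$ (which for odd $p$ requires the nontrivial sign $\int U^{p+2}\,dz>0$ proved separately) forces at least one negative eigenvalue, and the continuation from $E=0$ then bounds the negative index by one. You should replace the Sturm step with this continuation-plus-variational argument, or else supply a genuine oscillation theory adapted to operators of the form $\partial_z^{-2}+q(z)$.
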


We now compare our result  to the existing literature on spectral and orbital stability of
periodic waves with respect to co-periodic perturbations.
First, in comparison with the analysis in \cite{Hakkaev2}, the result of Theorem \ref{theorem-main}
is more general since $p \in \mathbb{N}$ is not restricted to the integrable cases $p = 1$ and $p = 2$.
On a technical level, the method of proof of Theorem \ref{theorem-main} is simple and robust,
so that many unnecessary explicit computations from \cite{Hakkaev2} are avoided. Indeed, in
the transformation of the spectral problem $\lambda v = \partial_z L v$
to the spectral problem $M \psi = \lambda \partial_z \psi$, where $M$ is a second-order Schr\"{o}dinger
operator from $H^2_{\rm per}(-T,T) \to L^2_{\rm per}(-T,T)$, the zero-mean constraint is lost\footnote{Note
that this transformation reflects the change of coordinates owing to which the reduced Ostrovsky equations
are reduced to the semi-linear equations of the Klein--Gordon type. This transformation also changes the period
of the travelling periodic wave.}. Consequently, the operator $M$ was found in \cite{Hakkaev2}
to admit two negative eigenvalues in $L^2_{\rm per}(-T,T)$, which are computed explicitly
by using eigenvalues of the Schr\"{o}dinger operator with elliptic potentials.
By adding three constraints for the spectral problem $M \psi = \lambda \partial_z \psi$, the authors of \cite{Hakkaev2}
showed that the operator $M$ becomes positive on the constrained space, again by means
of symbolic computations involving explicit Jacobi elliptic functions.
All these technical details become redundant in our simple approach.

Second, we mention another type of improvement of our method compared to the analysis of spectral stability of
periodic waves in other nonlinear evolution equations \cite{Nataly1,Nataly2}. By establishing
first the monotonicity of the energy-to-period map $E \mapsto 2T$ for a smooth family of periodic waves,
we give a very precise count on the number of negative eigenvalues of the operator $L$ in $L^2_{\rm per}(-T,T)$
without doing numerical approximations on solutions of the homogeneous equation $L v = 0$.
Indeed, the smooth family of periodic waves has a limit to zero solution, for which
eigenvalues of $L$ in $L^2_{\rm per}(-T,T)$ are found from Fourier series. The zero eigenvalue of
$L$ is double in this limit and it splits once the amplitude of the periodic wave
becomes nonzero. Owing to the monotonicity of the map $E \mapsto 2T$ and continuation arguments,
the negative index of the operator $L$ remains invariant
along the entire family of the smooth periodic waves. Therefore, the negative
index of the operator $L$ is found for the entire family of periodic waves  by a simple argument,
again avoiding cumbersome analytical or approximate numerical computations.

Finally, we also mention that the spectral problem $\lambda v = \partial_z L v$ is typically difficult
when it is posed in the space $L^2_{\rm per}(-T,T)$ because the mean-zero constraint is needed on $v$
in addition to the orthogonality condition $\langle U, v \rangle_{L^2_{\rm per}} = 0$. The two constraints
are taken into account by studying the two-parameter family of smooth periodic waves
and working with a $2$-by-$2$ matrix of projections \cite{Bronski,Johnson}. This complication is avoided
for the reduced Ostrovsky equation (\ref{redOst}) because the spectral problem
$\lambda v = \partial_z L v$ is posed in space $\dot{L}^2_{\rm per}(-T,T)$ and the only
orthogonality condition $\langle U, v \rangle_{L^2_{\rm per}} = 0$ is studied with
the help of identities satisfies by the periodic wave $U$.

As a limitation of the results of Theorem \ref{theorem-main} we mention that the nonlinear orbital
stability of travelling periodic waves cannot be established for the reduced Ostrovsky equations (\ref{redOst})
by using the energy function (\ref{energy}) in space (\ref{momentum-space}).
This is because the local solution is defined in
$\dot{H}^s_{\rm per}(-T,T)$ for $s > \frac{3}{2}$ \cite{SSK10}, whereas the energy function
is defined in $\dot{L}^2_{\rm per}(-T,T) \cap L^{p+2}_{\rm per}(-T,T)$. As a result, coercivity
of $H(u)$ in the space of fixed momentum (\ref{momentum-space}) only controls the $L^2$ norm of
time-dependent perturbations. Local well-posedness in such spaces of low regularity
is questionable and so is the proof of orbital stability of the travelling periodic waves
in the time evolution of the reduced Ostrovsky equations (\ref{redOst}).

\section{Monotonicity of the energy-to-period map}
\label{section-monoton}

Traveling wave solutions of  the reduced Ostrovsky equation~\eqref{redOst} are  solutions of the
second-order differential equation~\eqref{E ODE} with fixed $c > 0$ and $p \in \mathbb{N}$.
The following lemma establishes a correspondence between the smooth periodic solutions of
the second-order equation \eqref{E ODE} and the periodic orbits around the center of an
associated planar system, see Figure \ref{Fig phaseportrait}. For lighter notations,
we replace $U(z)$ by $u(z)$ and denote the derivatives in $z$ by primes.

\begin{figure}[h!]
    \centering
        \includegraphics[width=0.35\textwidth]{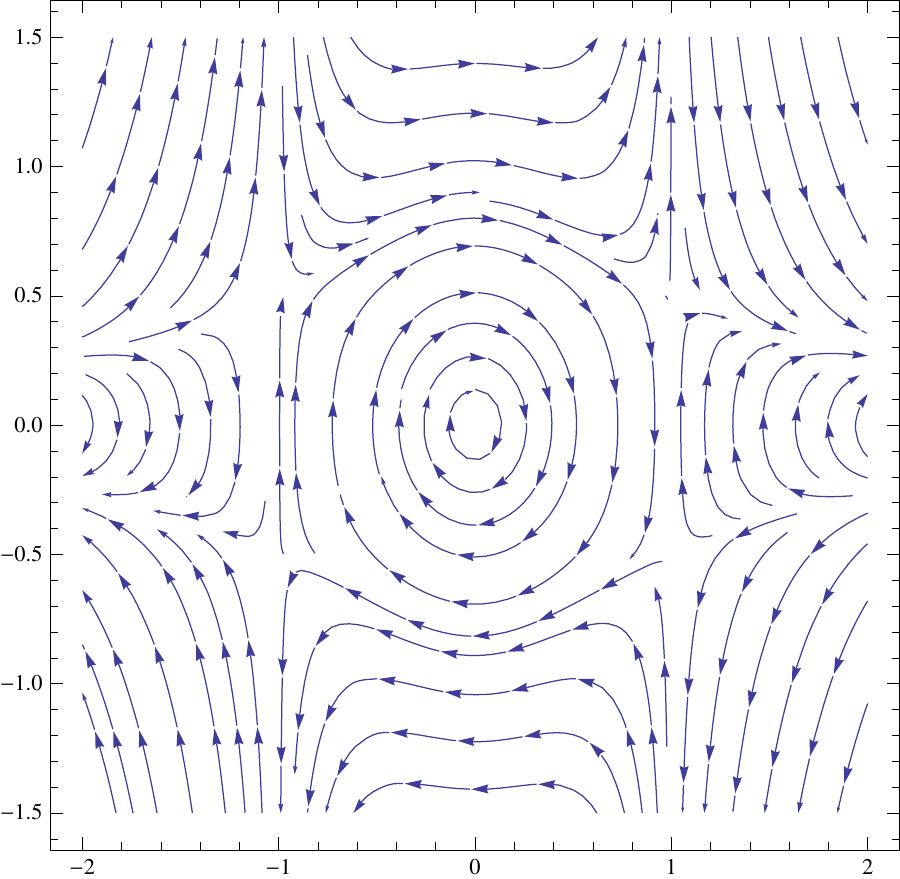}\quad
        \includegraphics[width=0.35\textwidth]{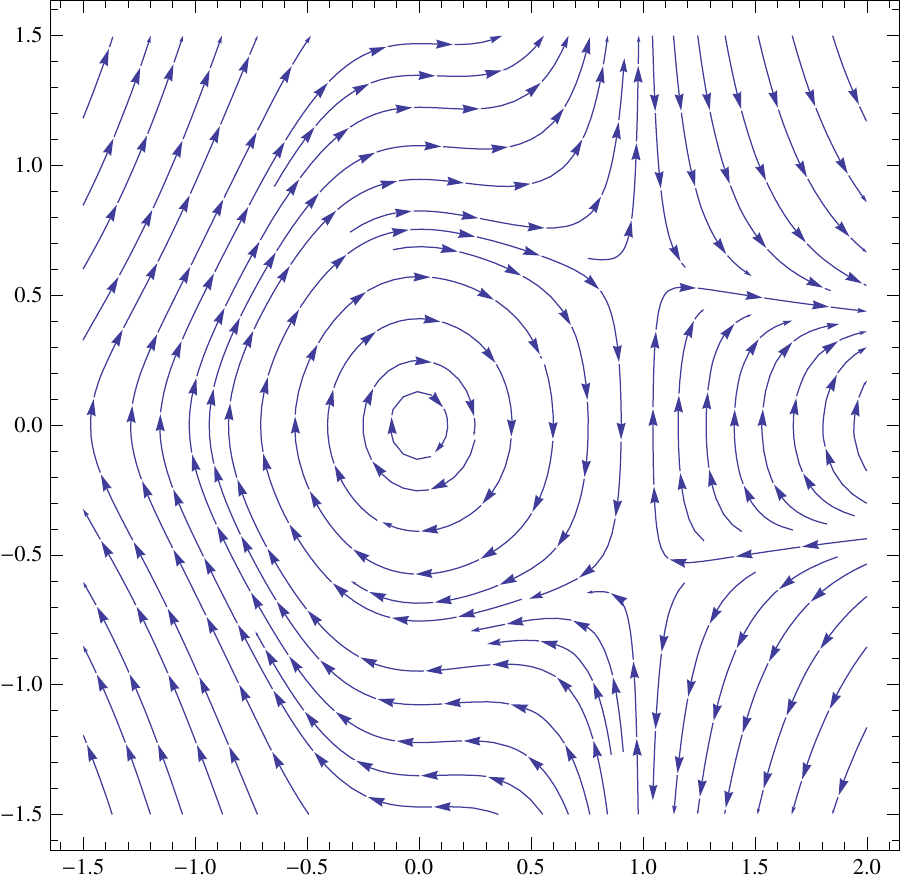}
    \caption{Phase portraits of system \eqref{E sys uv} for $p = 2$ (left) and $p = 1$ (right).}
    \label{Fig phaseportrait}
\end{figure}

\begin{lemma}
\label{L ode sys}
For every $c>0$ and $p\in\N$ the following holds:
\begin{enumerate}[$(i)$]
\item A function $u$ is a smooth periodic solution of equation \eqref{E ODE} if and only if $(u,v)=(u,u')$ is a periodic orbit of the planar differential system
\begin{equation}
    \label{E sys uv}
       \left\{
      \begin{array}{l}
        u' =v,\\[2pt]
        v' =\dfrac{-u + pu^{p-1}v^2}{c-u^p}.
      \end{array}\right.
  \end{equation}
\item The system (\ref{E sys uv}) has a first integral given by \eqref{first-order}, which we write  as
\begin{equation}
\label{E FirstInt}
    E(u,v)=A(u)+B(u) v^2,
\end{equation}
with $A(u) = \frac{c}{2}u^2- \frac{1}{p+2} u^{p+2}$ and $B(u) = \frac{1}{2}(c-u^p)^2$.
\item Every periodic orbit of system \eqref{E sys uv} belongs to the period annulus\footnote{The largest punctured neighbourhood of  a  center which consists entirely of periodic orbits  is called \emph{period annulus}, see~\cite{Chicone}.
} of the center at the origin of the $(u,v)$-plane and lies  inside  some energy level curve of $E$, with $E  \in (0,E_c)$ where
\begin{equation}
\label{E-c}
    E_c := A(c^{1/p}) = \frac{p}{2(p+2)} c^{\frac{p+2}{p}}.
\end{equation}.
 \end{enumerate}
\end{lemma}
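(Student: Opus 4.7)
The plan is to verify the three parts sequentially, each being either a direct computation or a qualitative analysis of the planar system \eqref{E sys uv}.

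For part $(i)$, I would expand the total derivative in \eqref{E ODE} to obtain
$(c - u^p)\, u'' - p u^{p-1}(u')^2 + u = 0$,
and, dividing by $c - u^p$, read off the second equation of \eqref{E sys uv} after setting $v = u'$. This equivalence is valid provided $c - u^p \neq 0$ along the orbit, which must be justified for smooth periodic solutions of \eqref{E ODE}: at a point $z_0$ where $c - u^p$ vanishes, the coefficient of $u''$ in \eqref{E ODE} disappears while the zero-order term $u$ equals $c^{1/p} \neq 0$, forcing a singularity incompatible with $u \in H^\infty_{\rm per}$. So any smooth periodic $u$ stays in the open region $\{u^p < c\}$ where \eqref{E sys uv} is smooth, and the correspondence is bijective.

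For part $(ii)$, I would verify by direct differentiation that $E(u,v)$ in \eqref{E FirstInt} is conserved along solutions of \eqref{E sys uv}. Using $A'(u) = u(c - u^p)$, $B'(u) = -p u^{p-1}(c - u^p)$, and $2B(u) = (c - u^p)^2$, the computation
\[
\frac{dE}{dz} = \bigl[A'(u) + B'(u)\, v^2\bigr] v + 2 B(u)\, v\, v'
\]
combined with the second equation of \eqref{E sys uv} shows that the right-hand side cancels identically. Identifying \eqref{E FirstInt} with the first-order invariant \eqref{first-order} under $(u,v) = (U,U')$ is then immediate.

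For part $(iii)$, I would first classify equilibria of \eqref{E sys uv} in $\{u^p < c\}$: setting $v = 0$ in the first equation and using $c - u^p \neq 0$ in the second forces $u = 0$, so the origin is the unique equilibrium. The linearization $u' = v$, $v' = -u/c$ has purely imaginary eigenvalues $\pm i c^{-1/2}$, so the origin is a linear center, and since $E$ has an isolated local minimum there with $E(0,0) = 0$, a neighborhood of the origin is foliated by closed level curves, producing a period annulus. To describe its outer boundary I would analyze $A$ on $\{u^p < c\}$: since $A'(u) = u(c - u^p)$, the interior extrema of $A$ occur at $u = c^{1/p}$ (and at $u = -c^{1/p}$ if $p$ is even), where $A$ attains the value $E_c$ computed in \eqref{E-c}. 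For $E_0 \in (0, E_c)$ the equation $v^2 = (E_0 - A(u))/B(u)$ defines a smooth closed curve surrounding the origin that stays strictly inside $\{u^p < c\}$, whereas the level $\{E = E_c\}$ reaches the singular line $u^p = c$ and forms the outer boundary of the annulus. Because $E$ is a first integral there are no limit cycles, and because the origin is the only equilibrium, every periodic orbit must encircle it; combined with the classification of level sets, every periodic orbit lies in the period annulus with $E \in (0, E_c)$.

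The step I expect to be the main obstacle is the description of the level sets in part $(iii)$, since their global shape depends on the parity of $p$ (compare the two panels of Figure \ref{Fig phaseportrait}): for even $p$ the potential $A$ is even with two symmetric interior maxima, while for odd $p$ it has a single interior maximum at $u = c^{1/p}$ and $A(u) \to +\infty$ as $u \to -\infty$. In both cases one must check that, for $E_0 < E_c$, the set $\{A \leq E_0\}$ restricted to the connected component of $\{u^p < c\}$ containing the origin is a bounded interval on which $B > 0$, so that the corresponding level curve of $E$ is a genuine closed orbit that does not touch the singular line $u^p = c$.
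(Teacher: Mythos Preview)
Your handling of parts $(ii)$ and $(iii)$ is fine and matches the paper's argument closely (the paper also invokes the existence of a first integral to exclude limit cycles, the uniqueness of the equilibrium at the origin, and the singular line $u^p = c$ as the outer boundary of the annulus).

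There is, however, a genuine gap in your argument for part $(i)$. You claim that at a point $z_0$ where $c - u(z_0)^p = 0$ ``the zero-order term $u$ equals $c^{1/p}\neq 0$, forcing a singularity.'' But in the expanded equation
\[
(c-u^p)\,u'' - p\,u^{p-1}(u')^2 + u = 0
\]
the vanishing of the leading coefficient does \emph{not} force a contradiction: the two remaining terms can, and do, balance. Evaluating at $z_0$ gives $(u'(z_0))^2 = p^{-1} c^{(2-p)/p}$, a perfectly finite nonzero value. No singularity in $u$, $u'$ or $u''$ is produced by this equation alone, so smoothness is not directly violated.

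The paper closes this gap with an additional step that you are missing. From the computation above one gets $u'(z_0)\neq 0$, and one checks that $E(c^{1/p},u'(z_0)) = E_c$; by continuity of the first integral the whole orbit lies on the level $\{E = E_c\}$. A smooth periodic solution must have a turning point $u_+$ with $u'=0$, and on the $E_c$ level the (relevant) turning point is forced to be at $u_+ = c^{1/p}$. But we have just seen that $u' \neq 0$ whenever $u = c^{1/p}$, a contradiction. You should replace your ``forced singularity'' claim with this turning-point argument (or an equivalent one), since without it the key implication ``smooth periodic $\Rightarrow$ $c - u^p$ never vanishes'' is unproved.
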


\begin{proof}
The assertion in $(ii)$ is proved with a straightforward calculation. To prove $(iii)$,
we notice that system \eqref{E sys uv} has no limit cycles in view of
the existence of a  first integral,  and hence the periodic orbits form period annuli.
A periodic orbit must surround at least one critical point.
The unique critical point of system \eqref{E sys uv} is a center at the origin
on the $(u,v)$ plane,
corresponding to the  energy level $E =0$. In view of the presence of the singular line
$$
\{ u =  c^{1/p}, \quad v \in \mathbb{R}\} \subset \mathbb{R}^2
$$
we may  conclude, applying the Poincar\'{e}-Bendixon Theorem, that the set of periodic orbits
forms a punctured neighbourhood of the center, and that no other period annulus is possible.

It remains to show $(i)$. It is clear that $z \mapsto (u,v)=(u,u')$ is a smooth solution of
the differential system~\eqref{E sys uv}  if and only if $u$ is a smooth solution of
the second-order equation~\eqref{E ODE} satisfying $c\neq u(z)^p$ for all $z$.
We claim that $c\neq u(z)^p$ for all~$z\in\R$ for smooth periodic solutions $u$.
Indeed, let $p$ be odd for simplicity and recall that every periodic orbit
in a planar system has exactly two turning points $(u,u')=(u_{\pm},0)$ per
fundamental period. The turning points correspond to the maximum and minimum
of the periodic solution $u$ and satisfy the equation $A(u_{\pm})=E$.
The graph of $A(u)$ on $\R^+$ has a global maximum at $u=c^{1/p}$ with $E_c$ given in \eqref{E-c}.

The equation $A(u)=E$ has exactly two positive solutions for $E\in(0,E_c)$,
where $u=u_+$ corresponds to the smaller one inside the period annulus.
At $E=E_c$, the equation $A(u) = E$ has only one positive solution
given by $u_+=c^{1/p}$. Now assume that  for a smooth periodic solution $u$,
there exists $z_1$ such that $u(z_1)=c^{1/p}$. Then, equation (\ref{E ODE})
implies that $u'(z_1)=\pm p^{-1/2} c^{-\frac{p-2}{2p}}$, hence
the solution $(u,u')(z)$ to system \eqref{E sys uv} tends to the points
$p_{\pm} = (c^{1/p},\pm p^{-1/2} c^{-\frac{p-2}{2p}})$ as $z\to z_1$.
Since $E(p_{\pm})=E_c$ and by continuity of the first integral, this orbit
lies inside the $E_c$-level set. For such an orbit, we have seen that
its turning point is located at $u_+=c^{1/p}=u(z_1)$.  However, since
$u'(z_1)\neq 0$, this cannot be a turning point, which leads to a contradiction.
Hence, the assertion $(i)$ is proved.
\end{proof}

\medskip

\begin{remark}
\label{R period}
By Lemma \ref{L ode sys}, every smooth periodic solution $u$ of the differential equation \eqref{E ODE} corresponds to  a periodic orbit     $(u,v)=(u,u')$ inside the period annulus  of the differential system  \eqref{E sys uv}. Since $E$ is a first integral of \eqref{E sys uv}, this orbit lies  inside  some energy level curve of $E$, where $E  \in (0,E_c)$. We denote this orbit  by $\gamma_E$.
The period of this orbit is given by
\begin{equation}
\label{E period function}
    2T(E) = \int_{\g_E}\frac{du}{v},
\end{equation}
since  $\frac{du}{dz} = v$ in view of \eqref{E sys uv}.
The energy levels of the first integral  $E$
parameterize the set of periodic orbits inside the period annulus, and therefore this set forms
a smooth family $\{\g_E\}_{E\in (0,E_c)}$. In view of Lemma \ref{L ode sys}, we can therefore
assert that the set of smooth periodic solutions of \eqref{E ODE} forms a smooth family $\{u_E\}_{E\in (0,E_c)}$,
which is parameterized by $E$ as well. Moreover, it ensures that the period $2 T(E)$ of the periodic orbit $\g_E$
is equal to the period of the corresponding smooth periodic solution $u_E$ of the second-order equation \eqref{E ODE}.
\end{remark}

The main result of this section is  the following proposition, from which we conclude that the energy-to-period map
$E \mapsto 2T(E)$ for the smooth periodic solutions  of  equation \eqref{E ODE} is smooth and strictly monotonically decreasing. Together with the above Remark \ref{R period}, this proves statement (a) of Theorem~\ref{theorem-main}.

\begin{proposition}
\label{P T'<0}
For every $c > 0$ and $p \in \mathbb{N}$ the  function
\begin{equation*}
    T: (0,E_c) \longrightarrow  \R^+, \quad E \longmapsto T(E) = \frac{1}{2} \int_{\g_E}\frac{du}{v},
\end{equation*}
is strictly monotonically decreasing and
satisfies
\begin{align}
    T'(E) = -\frac{p}{4 (2+p) E} \int_{\g_E} \frac{u^p}{(c-u^p)}\frac{du}{v} < 0.
\end{align}
\end{proposition}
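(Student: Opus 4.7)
The plan is to express $T(E)$ as an Abel-type integral and then exploit the explicit algebraic structure of $A(u)=\frac{c}{2}u^2-\frac{1}{p+2}u^{p+2}$ to differentiate it. From the first integral we have
$T(E)=\frac{1}{\sqrt{2}}\int_{u_-}^{u_+}\frac{(c-u^p)\,du}{\sqrt{E-A(u)}}.$
Since $A'(u)=u(c-u^p)$, the substitution $t=A(u)$ on the two monotone branches $[u_-,0]$ and $[0,u_+]$ cancels the factor $c-u^p$ and produces
$T(E)=\frac{1}{\sqrt{2}}\int_0^E\frac{f(t)\,dt}{\sqrt{E-t}},\qquad f(t)=\frac{1}{\tilde u_R(t)}-\frac{1}{\tilde u_L(t)},$
where $\tilde u_R(t)>0$ and $\tilde u_L(t)<0$ denote the two inverse branches of $A$.

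To differentiate past the moving endpoints, I would first substitute $t=E\sin^2\phi$ so that the integral runs over the fixed domain $[0,\pi/2]$. Leibniz differentiation in $E$ then yields
$2E\,T'(E)-T(E)=\sqrt{2}\int_0^E\frac{t\,f'(t)\,dt}{\sqrt{E-t}}.$
The algebraic heart of the argument is the identity $2A(u)=uA'(u)+\frac{p}{p+2}u^{p+2}$ — a direct rearrangement of the definition of $A$ — which gives $\frac{A(u)}{u^3(c-u^p)}=\frac{1}{2u}+\frac{p\,u^{p-1}}{2(p+2)(c-u^p)}$ at $u=\tilde u_{R,L}(t)$. Substituting this into $tf'(t)$ and reversing the branch-wise change of variables collapses the right-hand side to $\sqrt{2}\bigl[-\tfrac{c}{2}I_0(E)+\tfrac{1}{p+2}I_{u^p}(E)\bigr]$, where $I_{u^k}(E):=\int_{u_-}^{u_+}u^k/\sqrt{E-A}\,du$. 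Using $cI_0-I_{u^p}=\sqrt{2}\,T$ to eliminate $I_0$ and rearranging gives the stated formula, after the identification $\int_{\gamma_E}\frac{u^p}{c-u^p}\frac{du}{v}=\sqrt{2}\,I_{u^p}(E)$.

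For the sign it remains to verify $I_{u^p}(E)>0$. One more branch-wise application of $t=A(u)$ rewrites this as $I_{u^p}(E)=\int_0^E g(t)/\sqrt{E-t}\,dt$ with $g(t)=\frac{\tilde u_R^{p-1}}{c-\tilde u_R^p}-\frac{\tilde u_L^{p-1}}{c-\tilde u_L^p}$, so positivity reduces to $g>0$ pointwise. When $p$ is even, $A$ is symmetric, so $\tilde u_L=-\tilde u_R$ and $g=\frac{2\tilde u_R^{p-1}}{c-\tilde u_R^p}>0$. When $p$ is odd, $A(-u)-A(u)=\frac{2}{p+2}u^{p+2}>0$ for $u>0$ forces $\tilde u_R(t)>|\tilde u_L(t)|$ throughout $(0,E)$; since $p-1$ is even, $\tilde u_R^{p-1}>|\tilde u_L|^{p-1}=\tilde u_L^{p-1}$, while $\tilde u_R^p>0>\tilde u_L^p$ gives $c-\tilde u_R^p<c-\tilde u_L^p$, so the first (positive) fraction in $g$ strictly exceeds the second and $g>0$. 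The main obstacle is the algebraic derivation of the formula — justifying the differentiation via the substitution $t=E\sin^2\phi$ and carrying out the reduction using $2A=uA'+\tfrac{p}{p+2}u^{p+2}$; the positivity argument is then a short pointwise comparison.
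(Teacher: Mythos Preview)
Your derivation is correct and arrives at exactly the formula and sign claimed in the proposition, but the route is genuinely different from the paper's.

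The paper works entirely with closed loop integrals over $\gamma_E$. Starting from $2ET(E)=\int_{\gamma_E}Bv\,du+\int_{\gamma_E}A\,du/v$, it removes the endpoint singularity in the second term by inserting a total differential $d\bigl[(2BA/A')v\bigr]$ along the orbit, obtaining $2ET(E)=\int_{\gamma_E}[B+G]v\,du$ with $G=(2BA/A')'-AB'/A'$; differentiation in $E$ then uses $\partial v/\partial E=1/(2Bv)$ and a short algebraic simplification yields the stated formula for $T'(E)$. Your approach instead passes to the Abel integral $T(E)=\tfrac{1}{\sqrt{2}}\int_0^E f(t)(E-t)^{-1/2}\,dt$ via the branch-wise substitution $t=A(u)$, differentiates with the $t=E\sin^2\phi$ trick, and reduces the resulting $\int_0^E t f'(t)(E-t)^{-1/2}\,dt$ using the Euler-type identity $2A=uA'+\tfrac{p}{p+2}u^{p+2}$. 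Both arguments exploit the same algebraic structure (homogeneity of the nonlinear part of $A$), but the paper packages it as a total-differential/Gelfand--Leray computation on the closed curve, whereas you package it as an explicit manipulation of the Abelian integral. Your method is more hands-on and requires tracking the behaviour of $f$ and $f'$ near $t=0$ (the $t^{-1/2}$ singularity there is exactly cancelled by $\sin\phi$, so the Leibniz step is indeed legitimate); the paper's loop-integral argument sidesteps this by producing a manifestly regular integrand before differentiating. For the sign, the two arguments are essentially the same: both reduce to $|u_-|<u_+$ via $A(-u)>A(u)$ for $u>0$ when $p$ is odd, with your version phrased as a pointwise comparison $g(t)>0$ after one more application of $t=A(u)$, while the paper compares the two half-integrals directly through the rescalings $u=u_+x$ and $u=-|u_-|x$.
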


\begin{proof}
Since $A(u) + B(u) v^2=E$ is constant along an orbit $\g_E$, we find that
\begin{equation}
\label{E ET}
    2 E \,T(E)= \int_{\g_E} B(u) v du + \int_{\g_E} A(u) \frac{du}{v}.
\end{equation}
To compute the derivative of $T$ with respect to $E$, we first resolve the singularity in
the second integral in equation \eqref{E ET}. To this end, recall that the orbit $\g_E$
belongs to the level curve $\{A(u) + B(u) v^2=E\}$ and therefore
 \begin{equation}
 \label{E dvdu}
     \frac{dv}{du} = -\frac{A'(u) + B'(u)v^2}{2B(u)v}
 \end{equation}
along the orbit. Note that $B(u)$ is different from zero for $E \in (0,E_c)$.
Furthermore, $BA/A'$ is bounded on $\gamma_E$. Using the fact that
the integral of a total differential $d$ over the closed orbit $\g_E$ yields zero, we find that
 \begin{eqnarray*}
     0 &=& \int_{\g_E} d \left[ \left(\frac{2BA}{A'}\right)(u) \,v \right ]\\
       &=& \int_{\g_E}\left (\frac{2BA}{A'}\right)'(u) \,v \,du
           + \left (\frac{2BA}{A'}\right)(u)\, dv\\
       &=& \int_{\g_E} \left (\frac{2BA}{A'}\right)'(u) \,v \,du
           - \left (\frac{2BA}{A'} \frac{A'}{2B}\right )(u) \frac{du}{v}
           - \left (\frac{2BA}{A'} \frac{B'}{2B}\right )(u) \,v \,du\\
       &=& \int_{\g_E} \left
       [\left (\frac{2BA}{A'}\right)'(u) -\left (\frac{AB'}{A'}\right )(u)\right ] v\, du
          - A(u)\frac{du}{v},
 \end{eqnarray*}
where we have used relation \eqref{E dvdu} in the third equality.  Denoting
\begin{equation}
\label{E G}
    G = \left (\frac{2BA}{A'} \right )' - \frac{AB'}{A'},
\end{equation}
this ensures that
\begin{equation}
\label{improved-T-E}
2 E T(E) =  \int_{\g_E} \left[ B(u) + G(u) \right] v du,
\end{equation}
where the integrand is no longer singular at the turning points, where the orbit $\gamma_E$ intersects
with the horizontal axis $v = 0$\footnote{The idea for this approach
of resolving the singularity is taken from \cite[Lemma 4.1]{Grau2011},
where the authors prove a more general result for polynomial systems having first integrals of the form \eqref{E FirstInt}.}.
Taking now the derivative of equation \eqref{improved-T-E} with respect to $E$ we obtain  that
\begin{align}
\label{computation-T-E}
   2 T(E) + 2 E \,T'(E) = \int_{\g_E} \frac{B(u) + G(u)}{2B(u)v} du,
\end{align}
where we have used that
$$
\frac{\partial v}{\partial E} = \frac{1}{2 B(u) v}
$$
in view of \eqref{E FirstInt}\footnote{
Note that (\ref{computation-T-E}) also follows by applying
Gelfand-Leray derivatives  in \eqref{improved-T-E}, see \cite{Ilyashenko2006}~Theorem 26.32,~p.~526.}.
From (\ref{computation-T-E}), we conclude that
\begin{eqnarray*}
    2 T'(E) &=& \frac{1}{E} \int_{\g_E} \left (\frac{B+G}{2B}\right ) (u) \frac{du}{v}
             - \frac{1}{E} \int_{\g_E} \frac{du}{v}  \\
           &=&  \frac{1}{E} \int_{\g_E}
           \frac{1}{2B} \left(\left (\frac{2AB}{A'}\right )' - \frac{(AB)'}{A'}\right ) (u) \frac{du}{v}.
\end{eqnarray*}
In view of the expressions for $A$ and $B$ defined in Lemma \ref{L ode sys},
further calculations show that
\begin{align}
\label{T-prime}
    T'(E) = -\frac{p}{4 (2+p) E} \int_{\g_E} \frac{u^p}{(c-u^p)}\frac{du}{v}.
\end{align}
We now need to show that $T'(E) < 0$ for every $E \in (0,E_c)$.
In view of the symmetry of the vector field with respect to the horizontal axis
and taking into account \eqref{E FirstInt}, we write (\ref{T-prime}) in the form
\begin{eqnarray}
\nonumber
    T'(E) &=& -\frac{p}{2 (2+p) E}
                \int_{u_-}^{u_+}  \frac{u^p}{(c-u^p)}\sqrt{\frac{B(u)}{E-A(u)}} du \\
        &=& -\frac{p}{2 \sqrt{2} (2+p) E} \int_{u_-}^{u_+}  \frac{u^p}{\sqrt{E-A(u)}} du,
        \label{T-prime-turning}
\end{eqnarray}
where $u_{\pm}$ denote the turning points of the orbit $\g_E$ with $E=A(u_{\pm})$,
i.e. the intersections of the orbit $\gamma_E$ with the horizontal axis $v = 0$. Therefore,
we find that $T'(E) <0$ if $p$ is even.
Now we show that the same property also holds when $p$ is odd. Denote
\begin{align}
\label{E Integrals}
    I_1(E):= \int_{u_-}^0  \frac{u^p}{\sqrt{E-A(u)}} du, \quad
    I_2(E):= \int_0^{u_+}   \frac{u^p}{\sqrt{E-A(u)}} du,
\end{align}
then
\begin{align}
    T'(E) &= -\frac{p}{2 \sqrt{2}(2+p) E}  \big[ I_1(E) + I_2(E)\big].
\end{align}
We perform the change of variables $u=u_+ x$ and find that
\begin{eqnarray*}
    I_2(E) &=&  \int_0^{u_+}   \frac{u^p}{\sqrt{A(u_+)-A(u)}} du
           = \int_0^1 \frac{ u_+^p x^p}{\sqrt{A(u_+) - A(u_+x)}} u_+ dx\\
            &=& \sqrt{2}u_+^p \int_0^1
            \frac{ x^p}{\sqrt{c(1-x^2) - \frac{2u_+^p }{p+2} (1-x^{p+2} )}} dx.
\end{eqnarray*}
To rewrite the first integral we change variables according to $u=-|u_-|x$ and obtain
\begin{eqnarray*}
    I_1(E) &=& \int_{-|u_-|}^0  \frac{u^p}{\sqrt{A(-|u_-|)-A(u)}} du
             = \int_1^0 \frac{ -|u_-|^p x^p}{\sqrt{A(-|u_-|) - A(u_-x)}} (-|u_-|) dx\\
            &=& -\sqrt{2}|u_-|^p \int_0^1
            \frac{ x^p}{\sqrt{c(1-x^2) + \frac{2|u_-|^p }{p+2} (1-x^{p+2} )}} dx.
\end{eqnarray*}
We claim that  $|u_-| < u_+$. Indeed, we have that  $A(u)<A(-u)$ on $(0,c^{1/p})$, since
\begin{align*}
    A(u)-A(-u) = u^2 \left( \frac{c}{2} - \frac{1}{p+2} u^p \right)
                        - u^2 \left( \frac{c}{2} + \frac{1}{p+2} u^p \right)
                      =- \frac{2}{p+2} u^{p+2} <0.
\end{align*}
Moreover, $A$ is monotone on $(0,c^{1/p})$. Assuming to the contrary that $|u_-| \geq u_+$,
we would have that $A(|u_-|)\geq A(u_+)$ and hence $A(u_+)\leq A(|u_-|) < A(u_-)$, which contradicts
the fact that $A(u_+)=A(u_-)$. Hence  $0<|u_-|<u_+<c^{1/p}$, which implies that $|I_1(E)|<I_2(E)$,
and therefore, $T'(E)<0$  also in the case when $p$ is odd.
The proof of Proposition \ref{P T'<0} is complete.
\end{proof}

The following result describes the limiting points of the
energy-to-period map $E \mapsto 2T(E)$ and is proved with routine computations.

\begin{lemma}
\label{lemma-E limits}
For every $c > 0$ and $p \in \mathbb{N}$, let $E \mapsto 2T(E)$
be the mapping defined by (\ref{E period function}). Then
        \begin{equation}
          \label{limit-E-zero}
            T(0) := \lim_{E\to 0} T(E)=  \pi c^{1/2},
        \end{equation}
and there exists $T_1 \in (0,\pi)$ such that
        \begin{equation}
          \label{limit-E-one}
            T(E_c) := \lim_{E \to E_c} T(E)= T_1 c^{1/2},
        \end{equation}
with $E_c$ defined in \eqref{E-c}.
\end{lemma}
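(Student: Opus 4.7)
The plan is to work directly with the closed-form period integral
\begin{equation*}
T(E) = \int_{u_-(E)}^{u_+(E)} \frac{c - u^p}{\sqrt{2(E - A(u))}}\,du,
\end{equation*}
obtained by eliminating $v$ in $E = A(u) + B(u)v^2$ and using the symmetry of $\gamma_E$ with respect to the $u$-axis. As a preliminary reduction, the rescaling $u(z) = c^{1/p}\,\tilde u(z/\sqrt{c})$ leaves equation \eqref{E ODE} invariant, maps $E \mapsto c^{(p+2)/p}\tilde E$, and multiplies $T$ by $\sqrt{c}$, so it suffices to prove $\tilde T(0) = \pi$ and $\tilde T(E_c) \in (0,\pi)$ in the normalized case $c = 1$, which I assume from now on.

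For $E \to 0^+$, I would substitute $u = \sqrt{2E}\,w$, turning the integral into
\begin{equation*}
T(E) = \int_{w_-(E)}^{w_+(E)} \frac{1 - (2E)^{p/2}\, w^p}{\sqrt{1 - w^2 + \frac{2(2E)^{p/2}}{p+2}\, w^{p+2}}}\,dw,
\end{equation*}
where the transformed turning-point equation $w_\pm^2 - \frac{2(2E)^{p/2}}{p+2}\, w_\pm^{p+2} = 1$ forces $w_\pm(E) \to \pm 1$. The integrand converges pointwise on $(-1,1)$ to $(1-w^2)^{-1/2}$, and the $w^{p+2}$ correction only sharpens the denominator for small $E$, so dominated convergence yields $T(0) = \int_{-1}^{1}(1-w^2)^{-1/2}\,dw = \pi$.

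For $E \to E_c^-$, the upper turning point tends to the singular value $u_+(E) \to 1$, where both $1 - u^p$ and $E_c - A(u)$ vanish. A Taylor expansion gives $A'(1) = 0$, $A''(1) = -p$, hence $E_c - A(u) = \tfrac{p}{2}(1-u)^2 + O((1-u)^3)$, while $1 - u^p = p(1-u) + O((1-u)^2)$, so the integrand extends continuously to the value $\sqrt{p}$ at $u = 1$. For even $p$, symmetry of $A$ yields the analogous continuous extension at $u = -1$ with $u_-(E) \to -1$; for odd $p$, the strict monotonicity of $A$ on $(-\infty,0)$ together with $A(-1) = \tfrac{p+4}{2(p+2)} > E_c$ shows that $u_-(E)$ tends to a unique $u_-^{\ast} \in (-1,0)$ solving $A(u_-^{\ast}) = E_c$, where one has the standard integrable $(u - u_-^{\ast})^{-1/2}$ turning-point singularity. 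Passing to the limit produces the finite positive value
\begin{equation*}
T_1 := \int_{u_-^{\ast}}^{\,1} \frac{1 - u^p}{\sqrt{2(E_c - A(u))}}\,du > 0,
\end{equation*}
with the convention $u_-^{\ast} = -1$ for even $p$. The strict upper bound $T_1 < \pi$ is then immediate from Proposition \ref{P T'<0}: for any fixed $E' \in (0, E_c)$, the strict monotonicity of $T$ and continuity at $E_c$ give $T_1 \le T(E') < T(0^+) = \pi$.

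The main technical obstacle is the uniform-integrability justification in both limits, since the interval of integration itself moves with $E$ and in the second limit touches the singular line $u = 1$. In both cases this is handled by splitting $[u_-(E),u_+(E)]$ into a fixed compact sub-interval (where pointwise convergence of a continuous integrand is trivial) plus two short endpoint neighbourhoods on which the substitutions and Taylor expansions above furnish an $E$-independent integrable majorant.
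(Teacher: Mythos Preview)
Your proof is correct and follows essentially the same approach as the paper: the same scaling normalisation to $c=1$, the same substitution $u=\sqrt{2E}\,w$ with dominated convergence for the $E\to 0$ limit, and the same appeal to Proposition~\ref{P T'<0} for the inequality $T_1<\pi$. The only minor difference is at $E\to E_c$: the paper invokes the boundedness of $T'(E)$ already established in~(\ref{T-prime-turning}) to conclude that the limit exists, whereas you obtain it directly by Taylor-expanding the integrand at $u=1$ to show it extends continuously there; both arguments are valid and arrive at the same explicit expression for $T_1$.
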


\begin{proof}
We can write (\ref{E period function}) in the explicit form
\begin{equation}
\label{T-turning-points}
T(E) = \int_{u_-}^{u_+} \frac{\sqrt{B(u)} du}{\sqrt{E - A(u)}},
\end{equation}
where the turning points $u_{\pm} \gtrless 0$ are given by the roots of $A(u_{\pm}) = E$. To prove the first assertion, we use the scaling transformation
$$
u = \left( \frac{2E}{c} \right)^{1/2} x,
$$
to rewrite the integral in (\ref{T-turning-points}) as follows:
$$
T(E) = c^{1/2} \int_{v_-}^{v_+} \frac{(1 - \mu x^p) dx}{\sqrt{1 - x^2 + 2 \mu x^{p+2}/(p+2)}}, \quad
 \mu := \frac{2^{p/2} E^{p/2}}{c^{(p+2)/2}},
$$
where $v_{\pm} \gtrless 0$ are roots of the algebraic equation
$$
v_{\pm}^2 = 1 + \frac{2}{p+2} \mu v_{\pm}^{p+2}.
$$
We note that $\mu \to 0$, $v_{\pm} \to \pm 1$ as $E \to 0$, which gives the formal limit
$$
\int_{v_-}^{v_+} \frac{(1 - \mu x^p) dx}{\sqrt{1 - x^2 + 2 \mu x^{p+2}/(p+2)}} \to \int_{-1}^1 \frac{dx}{\sqrt{1-x^2}}
= \pi \quad \mbox{\rm as} \quad \mu \to 0.
$$
This yields the limit (\ref{limit-E-zero}). The justification of the formal limit is
performed by rescaling $[v_-,v_+]$ to $[-1,1]$ and by using Lebesgue's Dominated Convergence Theorem,
since the integrand function and its limit as $\mu \to 0$ are absolutely integrable.

To prove the second assertion,  notice that for $E = E_c$, the turning points $u_{\pm}$ used in the integral (\ref{T-turning-points})
are known as $u_{\pm} = \pm c^{1/p} q_{\pm}$, where $q_+ = 1$ and $q_- > 0$ is a root of
the algebraic equation
$$
q_-^2 - \frac{2}{p+2} (-1)^p q_-^{p+2} = \frac{p}{p+2}.
$$
If $p$ is even, $q_- = 1$, while if $p$ is odd, $q_- \in (0,1)$, as follows
from the proof of Proposition \ref{P T'<0}. By splitting the integral
(\ref{T-turning-points}) into two parts we integrate over $[u_-,0]$ and $[0,u_+]$ separately and use the substitution
$u = \pm c^{1/p} x$ for the two integrals. Since $T'(E)$ is bounded for every $E > 0$ from the representation (\ref{T-prime-turning}),
including the limit $E \to E_c$, we obtain that  $T(E_c) := \lim_{E \to E_c} T(E)$ exists and is given by $T(E_c) = T_1 c^{1/2}$, where
\begin{eqnarray}
\nonumber
T_1 & := & \int_{0}^{1} \frac{(1 - x^p) dx}{\sqrt{1 - x^2 - 2(1 - x^{p+2})/(p+2)}}\\
\label{E-T1}
& \phantom{t} & + \int_{0}^{q_-} \frac{(1 - (-1)^p x^p) dx}{\sqrt{1 - x^2 - 2(1 - (-1)^p x^{p+2})/(p+2)}}.
\end{eqnarray}
Both integrals are finite and positive, from which the existence of $T_1 > 0$ is concluded. Since $T'(E) < 0$ for every $E > 0$ we have that $T_1 < \pi$.
\end{proof}

\section{Continuation of smooth periodic waves with respect to $c$}
\label{section-speed}

In Section \ref{section-monoton} we fixed the parameter $c > 0$ and
considered a continuation of the smooth periodic wave solutions $U$ with respect to
the parameter $E$ in $(0,E_c)$, where $E = 0$ corresponds to the zero solution
and $E = E_c$ corresponds to a peaked periodic wave.
The mapping $E \mapsto 2T(E)$ is found to be monotonically decreasing
according to Proposition \ref{P T'<0}. Therefore, this mapping can be inverted
for every fixed $c > 0$ and we denote the corresponding dependence by $E(T,c)$.
The range of the mapping $E \mapsto 2T(E)$, which was calculated in Lemma
\ref{lemma-E limits}, specifies the domain of the function $E(T,c)$ with respect to the parameter $T$.
The existence interval for the smooth periodic waves between the two limiting cases
(\ref{limit-E-zero}) and (\ref{limit-E-one}) obtained in Lemma~\ref{lemma-E limits} is shown in Figure~\ref{fig-region}.

\begin{figure}[h!]
    \centering
        \includegraphics[width=0.55\textwidth]{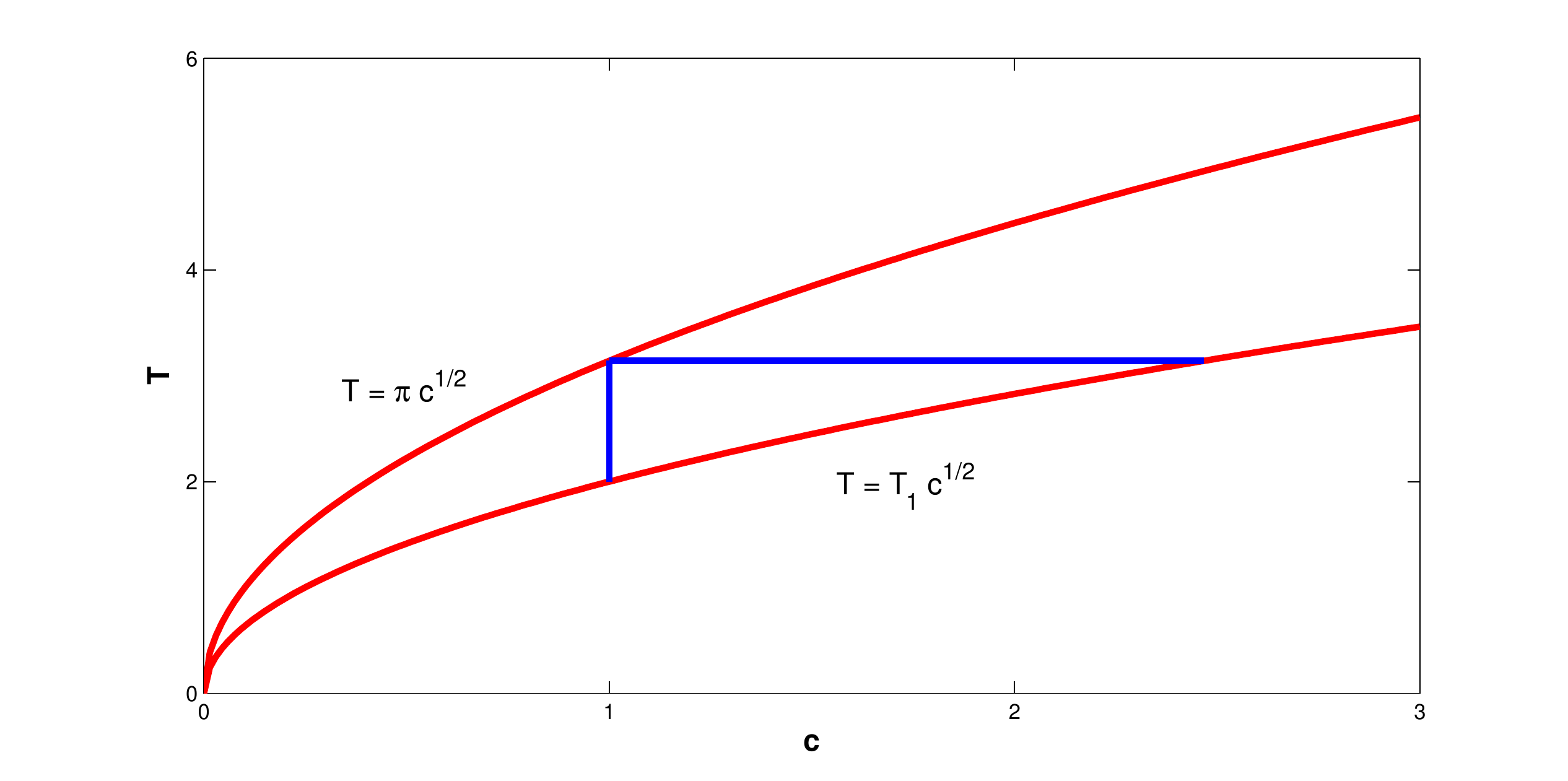}
    \caption{The existence region for smooth periodic waves in the $(T,c)$-parameter plane
    between the two limiting curves $T = \pi c^{1/2}$ and $T = T_1 c^{1/2}$ obtained in Lemma~\ref{lemma-E limits}.}
    \label{fig-region}
\end{figure}
When we fix the parameter $c > 0$, the half-period $T$ belongs to the interval
$(T_1 c^{1/2},\pi c^{1/2})$, which corresponds to the vertical line in Figure~\ref{fig-region}.
When we fix the parameter $T > 0$, the parameter $c$ belongs to the interval
$(T^2/\pi^2,T^2/T_1^2)$, which corresponds to the horizontal line in Figure~\ref{fig-region}.

In this section, we will fix the period $2T$ and consider a continuation of the smooth  periodic wave solutions $U$ with respect to the
parameter $c$ in a subset of $\mathbb{R}^+$. The next result specifies the interval of existence for the speed $c$.

\begin{lemma}
\label{lem-continuation}
For every $T > 0$ and $p \in \mathbb{N}$, there exists a family  of $2T$-periodic solutions $U=U(z;c)$ of equation~(\ref{E ODE}) parametrized by $c \in (c_0(T),c_1(T))$,
where
\begin{equation}
\label{c-0-E}
c_0(T) := \frac{T^2}{\pi^2}, \quad c_1(T) := \frac{T^2}{T_1^2} > c_0(T),
\end{equation}
with $T_1 \in (0,\pi)$ given in \eqref{E-T1} and $U \to 0$ as $c \to c_0(T)$.
Moreover,  the mapping $(c_0(T),c_1(T)) \ni c \mapsto U \in \dot{L}_{\rm per}^2(-T,T) \cap H^{\infty}_{\rm per}(-T,T)$ is $C^1$.
\end{lemma}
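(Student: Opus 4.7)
The plan is to invert the roles of the parameters $E$ and $c$ relative to Section \ref{section-monoton}. From the integral representation \eqref{T-turning-points}, the period function admits a joint smooth extension $T = T(E,c)$ on the open set $\{(E,c) : c > 0, \; 0 < E < E_c(c)\}$, since the orbits $\gamma_E$ depend smoothly on $(E,c)$ and, by Lemma \ref{L ode sys}, stay strictly inside the singular line $u = c^{1/p}$. Proposition \ref{P T'<0} gives $\partial_E T(E,c) < 0$, so the implicit function theorem applied to $T(E,c) = T$ produces a $C^\infty$ function $E = \mathcal{E}(T,c)$. The domain of $\mathcal{E}$ in the $(T,c)$-plane is obtained from Lemma \ref{lemma-E limits}: for each $c > 0$ the admissible half-periods satisfy $T \in (T_1 c^{1/2}, \pi c^{1/2})$, which for fixed $T > 0$ is equivalent to $c \in (T^2/\pi^2, T^2/T_1^2) = (c_0(T), c_1(T))$.

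For $T > 0$ fixed and $c \in (c_0(T), c_1(T))$, I would then define $U(\cdot; c)$ as the unique $2T$-periodic solution of \eqref{E ODE} lying on the level curve $\{E(u,v) = \mathcal{E}(T,c)\}$, normalized to attain its maximum at $z = 0$, i.e. $U(0;c) = u_+(\mathcal{E}(T,c), c)$ and $U'(0;c) = 0$, where $u_+ > 0$ is the positive root of $A(u_+) = \mathcal{E}(T,c)$. Since the planar vector field in \eqref{E sys uv} is smooth on a neighbourhood of $\gamma_{\mathcal{E}(T,c)}$, standard smooth dependence of ODE flows on initial data and parameters yields $U(z;c) \in C^\infty$ jointly in $(z, c)$. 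Consequently $c \mapsto U(\cdot;c)$ is $C^1$, and in fact $C^\infty$, into $C^k_{\rm per}(-T,T)$ for every $k$, hence into $H^\infty_{\rm per}(-T,T)$.

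It remains to verify the zero-mean condition and the limit as $c \to c_0(T)^+$. The former follows by integrating \eqref{E ODE} over one period: the first term is a total derivative and vanishes by periodicity, so $\int_{-T}^T U(z;c) \,dz = 0$, which places $U(\cdot;c)$ in $\dot{L}^2_{\rm per}(-T,T)$. For the limit, as $c \to c_0(T)^+$ the endpoint condition $T = \pi c^{1/2}$ is approached, which by \eqref{limit-E-zero} forces $\mathcal{E}(T,c) \to 0$; the corresponding orbit shrinks to the center at the origin, and $U(\cdot;c) \to 0$ in $C^\infty$ and therefore in the target space.

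The main technical point is justifying the joint smooth dependence of $T(E,c)$ and of the orbit $\gamma_E$ on both parameters up to the boundaries $E = 0$ and $E = E_c(c)$. This is not actually needed inside the open set, and for any compact subset $K \subset \{(T,c) : c_0(T) < c < c_1(T)\}$ the orbits stay uniformly bounded away from the singular line $u = c^{1/p}$ and from the center, so classical ODE regularity suffices; the more delicate asymptotics near the endpoints enter only through Lemma \ref{lemma-E limits}, which has already been established.
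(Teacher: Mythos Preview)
Your argument is correct and proceeds along the same lines as the paper's proof: both invert the monotone map $E \mapsto T(E)$ using Proposition~\ref{P T'<0} and read off the admissible range of $c$ from the endpoint values in Lemma~\ref{lemma-E limits}. The only notable difference is a technical one. The paper first applies the scaling transformation $U(z;c) = c^{1/p}\tilde U(\tilde z)$, $z = c^{1/2}\tilde z$, $T = c^{1/2}\tilde T$, thereby reducing \eqref{E ODE} to the normalized problem with $c=1$; the relation $\tilde T(\tilde E) = c^{-1/2}T$ then makes the bijection between $c$ and $\tilde E$ explicit. You instead treat the half-period as a joint smooth function $T(E,c)$ and apply the implicit function theorem directly to obtain $\mathcal E(T,c)$. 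Your route has the advantage of making the $C^1$ (indeed $C^\infty$) regularity of $c \mapsto U(\cdot;c)$ transparent via smooth dependence of the ODE flow on initial data and parameters, and you also supply the verification of the zero-mean condition and the limit $U \to 0$ as $c \to c_0(T)$, neither of which the paper spells out. The paper's scaling, on the other hand, separates the $c$-dependence cleanly and is reused in the proof of Lemma~\ref{lem-increasing-in-c} via the identity $E(T,c) = c^{(p+2)/p}\tilde E$.
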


\begin{proof}
Notice that the scaling transformation
\begin{equation}
\label{transformation}
    U(z;c) = c^{1/p} \tilde{U}(\tilde{z}), \quad z = c^{1/2} \tilde{z}, \quad T = c^{1/2} \tilde{T},
\end{equation}
relates $2T$-periodic solutions  $U$  of the boundary-value problem (\ref{E ODE}) to  $2\tilde{T}$-periodic solution $\tilde{U}$ of the same boundary-value problem with $c$ normalized to $1$, that is,
\begin{equation}
\label{E ODE normalized}
    \frac{d}{d \tilde z} \left[ (1 - \tilde U^p) \frac{d \tilde U}{d \tilde z} \right] + \tilde U(\tilde z) = 0, \quad
    \tilde U(-\tilde T) = \tilde U(\tilde T), \quad
    \tilde U'(-\tilde T) =\tilde U'(\tilde T).
\end{equation}
Lemma \ref{L ode sys} guarantees existence of a family $\{\tilde U_E\}_{E \in (0,E_1)}$
of $2\tilde T(\tilde E)$-periodic solutions of the boundary-value problem \eqref{E ODE normalized}.
In view of Lemma \ref{lemma-E limits} and since $T$ is fixed, we have
$\tilde T(\tilde E) = c^{-1/2}T \in(T_1,\pi)$, which implies that $c$ belongs to the interval $(c_0(T),c_1(T))$,
where $c_0(T)$ and $c_1(T)$ are given by \eqref{c-0-E}. Moreover, this relation provides a one-to-one
correspondence between the parameters $c$ and $\tilde E$ in view of the fact that
$\tilde T'(\tilde E)<0$ by Proposition \ref{P T'<0} which implies that $c^{1/2} =T/\tilde T (\tilde E)$
is monotone increasing in $\tilde E$.
In view of the transformation \eqref{transformation}, we therefore obtain existence of a family
$\{U_c\}_{c \in (c_0(T),c_1(T))}$ of $2T$-periodic solutions of the boundary-value problem
\eqref{E ODE}. The value $c_0(T)$ corresponds to the zero solution,
whereas $c_1(T)$ corresponds to the peaked periodic wave.
\end{proof}

Recall that the mapping $E \mapsto 2T(E)$ can be inverted for every fixed $c > 0$, and
that the corresponding dependence is denoted by $E(T,c)$. The next result shows
that $E(T,c)$ is a monotonically increasing function of $c\in (c_0(T),c_1(T))$
for every fixed $T > 0$.

\begin{lemma}
\label{lem-increasing-in-c}
For every $T > 0$, $p \in \mathbb{N}$, the mapping $(c_0(T),c_1(T)) \ni c \mapsto E(T,c)$ is  $C^1$ and  monotonically increasing.
\end{lemma}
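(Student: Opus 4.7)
The plan is to exploit the scaling transformation from Lemma \ref{lem-continuation} to reduce the statement to the already-established monotonicity of the energy-to-period map at fixed $c$. A direct substitution of the rescaling $U(z;c) = c^{1/p}\tilde U(\tilde z)$, $z = c^{1/2}\tilde z$ into the first-order invariant \eqref{first-order} gives $E = c^{(p+2)/p}\tilde E$, where $\tilde E$ is the invariant of the normalized equation \eqref{E ODE normalized}. Combined with $T = c^{1/2}\tilde T$, this yields the explicit representation
$$E(T,c) = c^{(p+2)/p}\,\mathcal{E}(T c^{-1/2}),$$
where $\mathcal{E}:(T_1,\pi)\to(0,E_1)$ denotes the inverse of the normalized energy-to-period map $\tilde E \mapsto \tilde T(\tilde E)$.

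By Proposition \ref{P T'<0} together with Lemma \ref{lemma-E limits}, the normalized map $\tilde T(\tilde E)$ is a $C^1$ diffeomorphism with $\tilde T'(\tilde E)<0$; hence its inverse $\mathcal{E}$ is $C^1$ with $\mathcal{E}(\tilde T)>0$ and $\mathcal{E}'(\tilde T)<0$ throughout its domain. The $C^1$ regularity of $c \mapsto E(T,c)$ follows at once from the displayed formula as a composition of $C^1$ functions. Differentiating at fixed $T$ and using $\partial_c(Tc^{-1/2}) = -\tilde T/(2c)$ produces
$$\frac{\partial E}{\partial c}(T,c) = c^{2/p}\left[\,\frac{p+2}{p}\,\mathcal{E}(\tilde T) \;-\; \frac{\tilde T}{2}\,\mathcal{E}'(\tilde T)\,\right], \qquad \tilde T = Tc^{-1/2}.$$
Both summands inside the bracket are strictly positive, so $\partial_c E(T,c) > 0$ on $(c_0(T),c_1(T))$, which is exactly the desired strict monotonicity.

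I do not anticipate a genuine obstacle here: the only delicate point is bookkeeping the scaling exponents correctly, after which the monotonicity is a formal consequence of the sign of $\tilde T'(\tilde E)$ established in Proposition \ref{P T'<0}. The guiding insight is that, at fixed $T$, increasing $c$ has two compounding monotone effects on $E$ -- the explicit prefactor $c^{(p+2)/p}$ grows, and the rescaled half-period $\tilde T = T c^{-1/2}$ shrinks, which through $\mathcal{E}' < 0$ pushes $\mathcal{E}(\tilde T)$ upward -- so that the two contributions to $\partial_c E$ never cancel.
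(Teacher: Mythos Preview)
Your proof is correct and follows essentially the same approach as the paper: both arguments use the scaling transformation to reduce to the normalized equation with $c=1$, obtain the relation $E(T,c)=c^{(p+2)/p}\tilde E$ with $\tilde T(\tilde E)=Tc^{-1/2}$, and then differentiate in $c$ to show $\partial_c E>0$ from the sign $\tilde T'(\tilde E)<0$ established in Proposition~\ref{P T'<0}. The only cosmetic difference is that the paper differentiates the relation $T=c^{1/2}\tilde T(Ec^{-(p+2)/p})$ implicitly and appeals to the implicit function theorem, whereas you invert $\tilde T$ to $\mathcal{E}$ first and differentiate the explicit composite; the resulting formulas are equivalent (yours becomes the paper's identity \eqref{root-finding-der-pos} upon multiplying through by $|\tilde T'(\tilde E)|$ and using $\mathcal{E}'=1/\tilde T'$).
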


\begin{proof}
Using the transformation (\ref{transformation}) in the boundary-value problem \eqref{E ODE normalized}, we obtain that
$$
E(T,c) = c^{\frac{p+2}{p}} \tilde{E},
$$
where $ \tilde{E}$  is the energy level of the first integral of the second-order equation in \eqref{E ODE normalized},
$$
    \tilde{E} = \frac{1}{2} (1 - \tilde U^p)^2 \left(\frac{d U}{d \tilde{z}} \right)^2
    + \frac{1}{2} \tilde U^2 - \frac{1}{p+2} \tilde U^{p+2}.
$$
Now, as $T$ is fixed and $\tilde{T} = \tilde{T}(\tilde{E})$ is defined by (\ref{E period function})
for $c$ normalized to $1$, we can define $E(T,c)$ from the root of the following equation
\begin{equation}
\label{root-finding}
T = c^{\frac{1}{2}} \tilde{T}\left(E(T,c) c^{-\frac{p+2}{p}}\right).
\end{equation}
Since $\tilde{T}(0) = \pi$ and $\tilde{T}(E_1) = T_1$,
we have roots $E(T,c_0(T)) = 0$ and $E(T,c_1(T)) = E_c$
of the algebraic equation (\ref{root-finding}), with $E_c$ given by (\ref{E-c}) at $c = c_1(T)$.
In order to continue the roots by using the implicit function theorem for every $c \in (c_0(T),c_1(T))$,
we differentiate (\ref{root-finding}) with respect to $c$ at fixed $T$ and obtain
\begin{equation}
\label{root-finding-der}
0 = \frac{1}{2} c^{-\frac{1}{2}} \tilde{T}(\tilde{E}) - \frac{p+2}{p} E
c^{-\frac{3p+4}{2p}} \tilde{T}'(\tilde{E}) + c^{-\frac{p+4}{2 p}} \tilde{T}'(\tilde{E}) \frac{\partial E(T,c)}{\partial c}.
\end{equation}
By Proposition \ref{P T'<0}, we have $\tilde{T}'(\tilde{E}) < 0$ for $\tilde{E} \in (0,E_1)$,
so that we can rewrite (\ref{root-finding-der}) as follows:
\begin{equation}
\label{root-finding-der-pos}
\left| \tilde{T}'(\tilde{E}) \right| \frac{\partial E(T,c)}{\partial c}
= \frac{1}{2} c^{\frac{2}{p}} \tilde{T}(\tilde{E}) + \frac{p+2}{p} E
c^{-1} \left| \tilde{T}'(\tilde{E}) \right| > 0.
\end{equation}
Recall that $\tilde{T}'(\tilde{E})$ is bounded and nonzero for every $E \in (0,E_1)$ and in the limit $E \to E_1$.
By the implicit function theorem and thanks to the smoothness of all dependencies,
there exists a unique, monotonically increasing $C^1$ map $(c_0(T),c_1(T)) \ni c \mapsto E(T,c)$ such that
$E(T,c)$ is a root of equation (\ref{root-finding}) and $E(T,c_1(T)) = E_c$, where
$E_c$ is given by (\ref{E-c}) at $c = c_1(T)$.
\end{proof}

We shall now consider how the $ L^2_{\rm per}(-T,T)$ norm of the periodic wave $U$ with fixed period $2T$
depends on the parameter $c$. In order to prove that it is an increasing function
of $c$ in $(c_0(T),c_1(T))$, we obtain a number of identities satisfied by the periodic wave $U$.
This result will be used in the proof of Proposition~\ref{positive-operator} in Section \ref{section-Krein}.

\begin{lemma}
\label{lem-increasing}
For every $T > 0$, $p \in \mathbb{N}$, the mapping $(c_0(T),c_1(T)) \ni c \mapsto \| U \|_{L^2_{\rm per}(-T,T)}^2$ is $C^1$
and monotonically increasing. Moreover, if the operator $L$ is defined by \eqref{operator-L}, then
$\partial_c U \in \dot{L}_{\rm per}^2(-T,T)$ satisfies
\begin{equation}
\label{inhom-partial-c}
L \partial_c U = -U
\end{equation}
and
\begin{equation}
\label{slope-condition}
    \langle\partial_c U,U\rangle_{L^2_{\rm per}}  > 0.
\end{equation}
\end{lemma}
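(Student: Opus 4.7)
The plan is to derive the identity $L\partial_c U = -U$ by differentiating the Euler--Lagrange equation in $c$, and then prove \eqref{slope-condition} (which, via $\tfrac{d}{dc}\|U\|^2_{L^2_{\rm per}} = 2\langle U,\partial_c U\rangle_{L^2_{\rm per}}$, is equivalent to the monotonicity in the first assertion) by reducing to the normalized $c=1$ problem through the scaling \eqref{transformation} and a short Abelian-integral computation. The $C^1$-regularity part of the first assertion is immediate from Lemma~\ref{lem-continuation}, which ensures $c \mapsto U$ is $C^1$ into $\dot L^2_{\rm per}(-T,T)\cap H^\infty_{\rm per}(-T,T)$, so $c\mapsto \|U\|^2_{L^2_{\rm per}}$ is $C^1$.

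For \eqref{inhom-partial-c}, I recall that $U(c)$ is a critical point in $\dot L^2_{\rm per}$ of the extended energy $F(u;c) = H(u) + cQ(u)$ whose Hessian at $U$ is $L$. Differentiating the equation $F'_u(U(c);c) = 0$ in $c$ yields $L\partial_c U + \partial_c F'_u(U;c) = 0$, and since the only explicit $c$-dependence of $F$ occurs through $cQ$, one has $\partial_c F'_u(U;c) = U$, giving $L\partial_c U = -U$. Differentiating the constraint $\int_{-T}^T U\, dz = 0$ in $c$ with $T$ fixed shows that $\partial_c U$ has zero mean, so $\partial_c U \in \dot L^2_{\rm per}(-T,T)$ as required.

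For \eqref{slope-condition}, I would use the scaling \eqref{transformation} to write $\|U\|^2_{L^2_{\rm per}} = c^{(p+4)/(2p)}\,\mathcal G(Tc^{-1/2})$, where $\mathcal G(\tilde T) := \|\tilde U\|^2_{L^2(-\tilde T,\tilde T)}$ is the squared $L^2$-norm of the normalized ($c=1$) solution of half-period $\tilde T$. Setting $\mathcal G(\tilde T) = \tilde g(\tilde E(\tilde T))$, where $\tilde E \mapsto \tilde T(\tilde E)$ is the period function of the $c=1$ problem (monotone decreasing by Proposition~\ref{P T'<0}), I would derive the Abelian representation
\[
\tilde g(\tilde E) = 2\sqrt{2}\int_{\tilde u_-}^{\tilde u_+}\sqrt{\tilde E - \tilde A(\tilde U)}\,d\tilde U, \qquad \tilde A(u) = \tfrac{1}{2}u^2 - \tfrac{1}{p+2}u^{p+2},
\]
by combining the virial identity $\|\tilde U\|^2 = \int(1-\tilde U^p)(\tilde U')^2\,d\tilde z$ (obtained by multiplying the normalized ODE by $\tilde U$ and integrating by parts) with the first integral and the change of variable $d\tilde z = (1-\tilde U^p)\,d\tilde U/\sqrt{2(\tilde E-\tilde A(\tilde U))}$. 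Differentiating this representation in $\tilde E$, the boundary contributions vanish because $\tilde E - \tilde A(\tilde u_\pm) = 0$ at the turning points, giving
\[
\tilde g'(\tilde E) = \sqrt{2}\int_{\tilde u_-}^{\tilde u_+}\frac{d\tilde U}{\sqrt{\tilde E - \tilde A(\tilde U)}} > 0.
\]
Combined with $\tilde E'(\tilde T) < 0$ (the inverse of Proposition~\ref{P T'<0}), this yields $\mathcal G'(\tilde T) < 0$, and substitution into
\[
\frac{d}{dc}\|U\|^2_{L^2_{\rm per}} = \frac{p+4}{2pc}\,\|U\|^2_{L^2_{\rm per}} \;-\; \frac{T\,c^{(p+4)/(2p)}}{2c^{3/2}}\,\mathcal G'(\tilde T)
\]
produces two strictly positive summands, establishing both \eqref{slope-condition} and the monotonicity in the first assertion. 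The main technical step is the derivation of the Abelian representation of $\tilde g$ together with the vanishing of the boundary contributions upon differentiating in $\tilde E$; once this is in place, the positivity of $\tilde g'(\tilde E)$ is immediate and the sign bookkeeping through the scaling transfers it cleanly to monotonicity in $c$.
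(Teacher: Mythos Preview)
Your proof is correct and takes a genuinely different route from the paper's.

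For the identity $L\partial_c U = -U$ both arguments are the same: differentiate the Euler--Lagrange equation in $c$ with $T$ fixed. The difference lies in how monotonicity of $c\mapsto \|U\|^2_{L^2_{\rm per}}$ is obtained. The paper proceeds algebraically: it derives several identities linking $\|U\|^2$, $\int U^{p+2}$, $\|\partial_z^{-1}U\|^2$, and $E(T,c)T$, then differentiates $\mathcal H(c)$ and $c\mathcal Q(c)$ in $c$ and invokes the variational relation $\mathcal H' + c\mathcal Q' = 0$ to reach $c\mathcal Q'(c) = T\,\partial_c E(T,c) + \tfrac{p+4}{2p}\mathcal Q(c)$, whose positivity follows from the separate Lemma~\ref{lem-increasing-in-c}. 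You instead exploit the scaling \eqref{transformation} directly: the factor $c^{(p+4)/(2p)}$ already gives the term $\tfrac{p+4}{2pc}\|U\|^2$, and for the remaining piece you derive the Abelian representation $\tilde g(\tilde E) = 2\sqrt{2}\int_{\tilde u_-}^{\tilde u_+}\sqrt{\tilde E - \tilde A}\,d\tilde U$ via the virial identity, whose $\tilde E$-derivative is manifestly positive because the boundary terms vanish. Combined with $\tilde T'(\tilde E)<0$ from Proposition~\ref{P T'<0} this gives $\mathcal G'(\tilde T)<0$, completing the argument without recourse to Lemma~\ref{lem-increasing-in-c}. Your approach is more self-contained and computationally lighter for this lemma; the paper's route, however, produces as a byproduct the identity $\tfrac{d}{dc}\int U^{p+2}\,dz = \tfrac{2(p+1)(p+2)}{p}\mathcal Q(c)$, which it later uses in Corollary~\ref{cor-increasing} to establish $\int U^{p+2}\,dz>0$ for odd $p$. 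If you adopt your argument, that corollary would need a separate justification.
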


\begin{proof}
Integrating (\ref{E ODE}) in $z$ with zero mean, we can write
\begin{equation}
\label{equation-0}
(c-U^p) \partial_z U + \partial_z^{-1} U = 0.
\end{equation}
From here, multiplication by $\partial_z^{-1} U$ and integration by parts yield
\begin{equation}
\label{equation-1}
\| \partial_z^{-1} U \|^2_{L^2_{\rm per}(-T,T)} = c \| U \|_{L^2_{\rm per}(-T,T)}^2 - \frac{1}{p+1} \int_{-T}^T U^{p+2} dz.
\end{equation}
On the other hand, integrating (\ref{first-order}) over the period $2T$ and using equations
(\ref{equation-0}) and (\ref{equation-1}) yield
\begin{eqnarray}
\nonumber
2 E(T,c) T & = & \frac{c}{2} \| U \|_{L^2_{\rm per}(-T,T)}^2 - \frac{1}{p+2} \int_{-T}^T U^{p+2} dz + \frac{1}{2}
\left\| (c-U^p) \frac{d U}{d z} \right\|^2_{L^2_{\rm per}(-T,T)} \\
\nonumber
& = & \frac{c}{2} \| U \|_{L^2_{\rm per}(-T,T)}^2 - \frac{1}{p+2} \int_{-T}^T U^{p+2} dz + \frac{1}{2}
\| \partial_z^{-1} U \|^2_{L^2_{\rm per}(-T,T)} \\
& = & c \| U \|_{L^2_{\rm per}(-T,T)}^2 - \frac{(3p+4)}{2(p+1)(p+2)} \int_{-T}^T U^{p+2} dz.
\label{equation-2}
\end{eqnarray}
Expressing $c  \| U \|_{L^2_{\rm per}(-T,T)}^2$ from equations (\ref{equation-1}) and (\ref{equation-2}), we obtain
\begin{eqnarray}
\| \partial_z^{-1} U \|^2_{L^2_{\rm per}} = 2 E(T,c) T + \frac{p}{2(p+1)(p+2)} \int_{-T}^T U^{p+2} dz.
\label{equation-3}
\end{eqnarray}
From the fact that $U$ is a critical point of $H(u) + c Q(u)$ given by (\ref{energy}) and (\ref{momentum})
for a fixed period $2T$, we obtain
\begin{equation}
\label{variational-1}
\frac{d \mathcal{H}}{dc} + c \frac{d \mathcal{Q}}{dc} = 0,
\end{equation}
where
\begin{eqnarray}
\nonumber
\mathcal{H}(c) & = & - \frac{1}{2} \| \partial_z^{-1} U \|^2_{L^2_{\rm per}(-T,T)} - \frac{1}{(p+1)(p+2)}  \int_{-T}^T U^{p+2} dz \\
& = & -E(T,c) T - \frac{(p+4)}{4(p+1)(p+2)}\int_{-T}^T U^{p+2} dz
\label{variational-2}
\end{eqnarray}
and
\begin{eqnarray}
\nonumber
c \mathcal{Q}(c) & = & \frac{c}{2} \| U \|^2_{L^2_{\rm per}(-T,T)} \\
& = & E(T,c) T + \frac{(3p+4)}{4(p+1)(p+2)} \int_{-T}^T U^{p+2} dz
\label{variational-3}
\end{eqnarray}
are simplified with the help of equations (\ref{equation-2}) and (\ref{equation-3}) again.
Next, we differentiate (\ref{variational-2}) and (\ref{variational-3}) in $c$ for fixed $T$
and use (\ref{variational-1}) to obtain a constraint
\begin{eqnarray}
\nonumber
\frac{d \mathcal{H}}{dc} + c \frac{d \mathcal{Q}}{dc} & = & - \frac{(p+4)}{4(p+1)(p+2)} \frac{d}{dc} \int_{-T}^T U^{p+2} dz - \mathcal{Q}(c)
+ \frac{(3p+4)}{4(p+1)(p+2)} \frac{d}{dc} \int_{-T}^T U^{p+2} dz \\
& = & - \mathcal{Q}(c) + \frac{p}{2(p+1)(p+2)} \frac{d}{dc} \int_{-T}^T U^{p+2} dz = 0.
\label{variational-4}
\end{eqnarray}
From (\ref{root-finding-der-pos}), (\ref{variational-1}), (\ref{variational-2}), and (\ref{variational-4}), we finally obtain
\begin{eqnarray}
\nonumber
c \frac{d \mathcal{Q}}{dc} = -\frac{d \mathcal{H}}{dc} & = & T \frac{\partial E(T,c)}{\partial c} +
\frac{(p+4)}{4(p+1)(p+2)} \frac{d}{dc} \int_{-T}^T U^{p+2} dz \\
\label{positivity-derivative}
& = & T \frac{\partial E(T,c)}{\partial c} +  \frac{p+4}{2p} \mathcal{Q}(c) > 0.
\end{eqnarray}

To prove the second assertion, recall that the family of periodic waves $U(z;c)$ is $C^1$ with respect to $c$
by Lemma~\ref{lem-continuation}. Differentiating the second-order equation in \eqref{E ODE}
with respect to $c$ at fixed period $2T$ and integrating it twice with zero mean yields equation
(\ref{inhom-partial-c}). Notice that $\partial_c U$ is again $2T$-periodic,
since the period of $U$ is fixed independently of  $c$. Finally, we find that
\begin{equation*}
\langle \partial_c U, U \rangle_{L^2_{\rm per}} = \frac{1}{2} \frac{d}{dc} \| U \|_{L^2_{\rm per}}^2 >0,
\end{equation*}
since by the first assertion, the mapping $c\mapsto \| U \|_{L^2_{\rm per}}^2$ is monotonically increasing.
\end{proof}

As an immediate consequence of Lemmas \ref{lem-continuation} and \ref{lem-increasing},
we  prove the following result which will be used in the proof of  Proposition \ref{negative-index-L}
in Section \ref{section-negative}.

\begin{corollary}
\label{cor-increasing}
For every $T > 0$, $p \in \mathbb{N}$ and $c\in (c_0(T),c_1(T))$,
the periodic solution $U$ of the boundary-value problem  \eqref{E ODE}  satisfies
\begin{equation}
\label{positivity}
\int_{-T}^T U^{p+2} dz > 0.
\end{equation}
\end{corollary}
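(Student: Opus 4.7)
The key observation is that the monotonicity of $\|U\|_{L^2_{\rm per}}^2$ in $c$, established in Lemma~\ref{lem-increasing}, already delivered as a by-product a differential identity for the $L^{p+2}$-integral of $U$. My plan is to exploit this identity together with the vanishing of $U$ at the lower endpoint $c_0(T)$.

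More precisely, recall identity \eqref{variational-4} from the proof of Lemma~\ref{lem-increasing}, which states
\begin{equation*}
\mathcal{Q}(c) = \frac{p}{2(p+1)(p+2)} \frac{d}{dc} \int_{-T}^T U^{p+2}\, dz.
\end{equation*}
Since $\mathcal{Q}(c) = \tfrac{1}{2}\|U\|_{L^2_{\rm per}}^2$ is strictly positive for every $c \in (c_0(T),c_1(T))$ (as $U$ is not identically zero throughout this open interval), the first step is to rearrange this identity to obtain
\begin{equation*}
\frac{d}{dc} \int_{-T}^T U^{p+2}\, dz = \frac{2(p+1)(p+2)}{p}\,\mathcal{Q}(c) > 0,
\end{equation*}
so that the map $c \mapsto \int_{-T}^T U^{p+2}\, dz$ is strictly increasing on $(c_0(T),c_1(T))$.

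The second step is to identify the value at the lower endpoint. By Lemma~\ref{lem-continuation}, $U \to 0$ as $c \to c_0(T)$ in $\dot{L}^2_{\rm per}(-T,T)\cap H^{\infty}_{\rm per}(-T,T)$, and in particular in $L^{p+2}_{\rm per}(-T,T)$ by Sobolev embedding. Hence $\int_{-T}^T U^{p+2}\, dz \to 0$ as $c \to c_0(T)$. Combining this limit with strict monotonicity yields $\int_{-T}^T U^{p+2}\, dz > 0$ for all $c \in (c_0(T),c_1(T))$, which is the desired conclusion.

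I do not expect any real obstacle here, since all the analytic work has already been done in Lemma~\ref{lem-increasing} and Lemma~\ref{lem-continuation}; the proof is essentially just a rearrangement of identity \eqref{variational-4} together with the endpoint behaviour. The only point worth noting is that, although for even $p$ the positivity of $\int_{-T}^T U^{p+2}\, dz$ is immediate pointwise, the argument above is uniform in the parity of $p$ and therefore handles the odd case, where no sign information on $U^{p+2}$ is available pointwise, with no extra work.
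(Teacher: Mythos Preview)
Your proposal is correct and follows essentially the same route as the paper: rearrange identity \eqref{variational-4} into \eqref{positivity-what-we-need} to get strict monotonicity of $c\mapsto\int_{-T}^{T}U^{p+2}\,dz$, then use the endpoint behaviour $U\to 0$ as $c\to c_0(T)$ from Lemma~\ref{lem-continuation} to conclude positivity. Your additional remarks (Sobolev embedding for the limit and the parity comment) are fine but not needed beyond what the paper already records.
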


\begin{proof}
It follows from (\ref{variational-4}) that
\begin{equation}
\label{positivity-what-we-need}
\frac{d}{dc} \int_{-T}^T U^{p+2} dz = \frac{2(p+1)(p+2)}{p} \mathcal{Q}(c) > 0, \quad c \in (c_0(T),c_1(T)).
\end{equation}
On the other hand, $\int_{-T}^T U^{p+2} dz = 0$ at $c = c_0(T)$  by Lemma \ref{lem-continuation}. Integrating the inequality (\ref{positivity-what-we-need}) for $c > c_0(T)$ implies positivity
of $\int_{-T}^T U^{p+2} dz$ .
\end{proof}

\section{Negative index of the operator $L$}
\label{section-negative}

Recall that $T(E) \to T(0) = \pi c^{1/2}$ and $U\to 0$ as $E \to 0$ in view of Lemma~\ref{lemma-E limits}. In this limit, the operator given by (\ref{operator-L}) becomes an integral operator with constant coefficients,
\begin{equation*}
    L_0 = P_0( \partial_z^{-2} + c)P_0: \dot L_{per}^2(-T(0),T(0)) \to \dot L_{per}^2(-T(0),T(0)),
\end{equation*}
whose spectrum can be computed explicitly as
\begin{equation}
\label{spectrum-zero-amplitude}
    \sigma(L_0) = \left\{ c(1 - n^{-2}), \quad n \in \mathbb{Z} \backslash \{0\} \right\},
\end{equation}
by using Fourier series.  For every $c > 0$, the spectrum of $L_0$ is purely discrete and consists of double eigenvalues accumulating
to the point $c$. All double eigenvalues are strictly positive except for the lowest eigenvalue,
which is located at the origin. As is shown in \cite{JP} with a perturbation argument for $p = 1$ and $p = 2$, the spectrum of $L$
for $E$ near $0$ includes a simple negative eigenvalue,
a simple zero eigenvalue, and the positive spectrum is bounded away from zero.
We will show that this conclusion remains true for the entire family of smooth periodic waves.
Let us first prove the following.

\begin{lemma}
\label{lem-L}
For every $c > 0$, $p \in \mathbb{N}$, and $E \in (0,E_c)$, the operator $L$ given by (\ref{operator-L}) is self-adjoint
and its spectrum includes a countable set of isolated eigenvalues below
\begin{equation}
\label{bound C-}
C_-(E) := \inf_{z \in[-T(E),T(E)]} (c - U(z)^p) > 0.
\end{equation}
\end{lemma}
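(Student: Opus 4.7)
The strategy is to decompose $L$ as a compact perturbation of a bounded positive multiplication-type operator, and then apply Weyl's essential spectrum theorem to locate the essential spectrum.

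First I would verify that $C_-(E) > 0$. By Lemma~\ref{L ode sys}, a smooth periodic solution $U$ with energy $E \in (0, E_c)$ corresponds to a periodic orbit $\gamma_E$ whose turning points $u_\pm$ satisfy $A(u_\pm) = E < E_c = A(c^{1/p})$. Since $A$ is monotone on $[0,c^{1/p}]$, this gives $u_+ < c^{1/p}$, and the inequality $|u_-| < u_+ < c^{1/p}$ established in the proof of Proposition~\ref{P T'<0} shows that the range of $U$ is contained in $(-c^{1/p}, c^{1/p})$. Consequently $U(z)^p \leq u_+^p < c$ for every $z \in [-T(E), T(E)]$, so $C_-(E) = c - u_+^p > 0$.

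Next I would write $L = K + M$ on $\dot L^2_{\rm per}(-T,T)$, where
\[
K := P_0 \partial_z^{-2} P_0, \qquad M := P_0 \, M_{c - U^p} \, P_0,
\]
with $M_{c-U^p}$ denoting multiplication by $c - U(z)^p$. The operator $K$ is diagonal in the Fourier basis $\{e^{i \pi n z/T}\}_{n \neq 0}$ with eigenvalues $-T^2/(\pi n)^2 \to 0$, hence compact and self-adjoint. The operator $M$ is bounded (since $U \in H^\infty_{\rm per}$, so $c - U^p \in L^\infty$) and self-adjoint. Thus $L$ is bounded and self-adjoint on $\dot L^2_{\rm per}(-T,T)$.

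The key observation is that $M \geq C_-(E) I$ on $\dot L^2_{\rm per}$: for any $u \in \dot L^2_{\rm per}$ one has $P_0 u = u$, so
\[
\langle M u, u \rangle_{L^2_{\rm per}} = \langle (c - U^p) u, u \rangle_{L^2_{\rm per}} = \int_{-T}^{T} (c - U(z)^p) |u(z)|^2 \, dz \geq C_-(E) \| u \|_{L^2_{\rm per}}^2.
\]
By the spectral theorem for bounded self-adjoint operators this forces $\sigma(M) \subset [C_-(E), \infty)$.

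Finally I would apply Weyl's essential spectrum theorem: since $L - M = K$ is compact, $\sigma_{\rm ess}(L) = \sigma_{\rm ess}(M) \subset \sigma(M) \subset [C_-(E), \infty)$. Therefore every point of $\sigma(L)$ strictly below $C_-(E)$ lies outside the essential spectrum and is an isolated eigenvalue of finite multiplicity. Since the discrete spectrum of a bounded self-adjoint operator is at most countable, this yields the claim. I do not expect any real obstacle here; the only point requiring care is bookkeeping the action of the projections $P_0$ on the zero-mean subspace when comparing $M$ with the pointwise multiplication by $c - U^p$.
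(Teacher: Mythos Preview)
Your proof is correct and takes a genuinely different route from the paper. The paper does not use Weyl's theorem; instead it reformulates the eigenvalue equation $(L-\lambda I)v=0$ for $\lambda<C_-(E)$ via the substitution $w=(c-U^p-\lambda)^{1/2}P_0v$, reducing it to the fixed-point equation $w=K(\lambda)w$ for the compact, positive, $\lambda$-monotone family
\[
K(\lambda)=-(c-U^p-\lambda)^{-1/2}P_0\partial_z^{-2}P_0(c-U^p-\lambda)^{-1/2},
\]
and then reads off the eigenvalues of $L$ below $C_-(E)$ as the values of $\lambda$ at which an eigenvalue of $K(\lambda)$ crosses $1$ (a Birman--Schwinger-type argument). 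Your decomposition $L=K+M$ with $K=P_0\partial_z^{-2}P_0$ compact and $M\ge C_-(E)I$ is more direct: Weyl's theorem immediately localizes $\sigma_{\rm ess}(L)\subset[C_-(E),\infty)$ and forces everything below $C_-(E)$ to be discrete. Your argument is shorter and entirely standard; the paper's approach is more laborious here but has the side benefit of tracking how eigenvalues of the auxiliary operator move with $\lambda$, machinery that in principle could yield finer counting information (though for the present lemma none of that extra structure is actually used). Your careful handling of $P_0$ in the quadratic form $\langle Mu,u\rangle$ for $u\in\dot L^2_{\rm per}$ and your verification that $C_-(E)>0$ from the turning-point analysis are both fine.
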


\begin{proof}
The self-adjoint properties of $L$ are obvious. For every $E \in (0,E_c)$, there are positive constants $C_{\pm}(E)$
such that
\begin{equation}
\label{bounds-U}
C_-(E) \leq c - U(z)^p \leq C_+(E) \quad \mbox{\rm for every} \;\; z \in [-T(E),T(E)].
\end{equation}
For the rest of the proof we use the short notation  $T = T(E)$. The eigenvalue equation $(L-\lambda I) v = 0$ for $v \in \dot{L}^2_{\rm per}(-T,T)$ is equivalent to the
spectral problem
\begin{equation}
\label{tilde-L}
P_0 (c-U^p-\lambda) P_0 v = - P_0 \partial_z^{-2} P_0 v.
\end{equation}
Under the condition $\lambda < C_-(E)$, we have $c - U^p - \lambda \geq C_-(E) - \lambda > 0$.
Setting
\begin{equation}
\label{variable-w}
w := (c-U^p-\lambda)^{1/2} P_0 v \in L^2_{\rm per}(-T,T), \quad \lambda < C_-(E),
\end{equation}
we find that $\lambda$ is an eigenvalue of the spectral problem (\ref{tilde-L})
if and only if $1$ is an eigenvalue of the self-adjoint operator
\begin{equation}
\label{K-operator}
K(\lambda) = - (c-U^p-\lambda)^{-1/2} P_0 \partial_z^{-2} P_0 (c-U^p-\lambda)^{-1/2} : L^2_{\rm per}(-T,T) \to L^2_{\rm per}(-T,T),
\end{equation}
that is\footnote{This reformulation can be viewed as an adjoint version
of the Birmann--Schwinger principle used in analysis of isolated eigenvalues
of Schr\"{o}dinger operators with rapidly decaying potentials \cite{Gustaf}.}, $w = K(\lambda) w$.
The operator $K(\lambda)$ for every $\lambda < C_-(E)$ is a compact (Hilbert--Schmidt)
operator thanks to the bounds (\ref{bounds-U}) and the compactness of $P_0 \partial_z^{-2} P_0$.
Consequently, the spectrum of $K(\lambda)$ in $L^2_{\rm per}(-T,T)$
for every $\lambda < C_-(E)$ is purely discrete and consists of isolated eigenvalues.
Moreover, these eigenvalues are positive thanks to the positivity of $K(\lambda)$, as follows:
\begin{equation}
\label{representation-K}
\langle K(\lambda) w, w \rangle_{L^2_{\rm per}} =  \| P_0 \partial_z^{-1} P_0 (c-U^p-\lambda)^{-1/2} w \|_{L^2_{\rm per}}^2 \geq 0, \quad
\forall w \in L^2_{\rm per}(-T,T).
\end{equation}
We note that
\begin{itemize}
\item[(a)] $K(\lambda) \to 0^+$ as $\lambda \to -\infty$,
\item[(b)] $K'(\lambda) > 0$ for every $\lambda < C_-(E)$.
\end{itemize}
Claim (a) follows from (\ref{representation-K}) via spectral calculus:
\begin{eqnarray*}
\langle K(\lambda) w, w \rangle_{L^2_{\rm per}} \sim |\lambda|^{-1}
\| P_0 \partial_z^{-1} P_0 w \|_{L^2}^2 \quad \mbox{\rm as} \quad \lambda \to -\infty.
\end{eqnarray*}
Claim (b) follows from the differentiation of $K(\lambda)$,
\begin{eqnarray*}
\langle K'(\lambda) w, w \rangle_{L^2_{\rm per}} & = &
\frac{1}{2} \langle \rho(\lambda)K(\lambda)w,w\rangle_{L^2_{\rm per}}
+\frac{1}{2} \langle K(\lambda) \rho(\lambda)w,w\rangle_{L^2_{\rm per}},
\end{eqnarray*}
where we have defined the weight function $\rho(\lambda) := (c - U^p - \lambda)^{-1}$
which is strictly positive and uniformly bounded thanks to \eqref{bounds-U}.
Since $K(\lambda)$ is positive due to (\ref{representation-K}), both terms
in the above expression are positive in view of a generalization of Sylvester's law
of inertia for differential operators, see Theorem~4.2~in~\cite{Pelinovsky2011}.
Indeed, to prove that the first term is positive it suffices to show that the
eigenvalues $\mu$ of $\rho(\lambda) K(\lambda) $ are positive. The corresponding
spectral problem $\rho(\lambda) K(\lambda) w = \mu  w$ is equivalent to
$\rho(\lambda)^{1/2} K(\lambda) \rho(\lambda)^{1/2} v = \mu  v$ in view of
the substitution $w=\rho(\lambda)^{1/2}v$. By Sylvester's law, the number of negative
eigenvalues of $K(\lambda)$ is equal to the number of negative eigenvalues of
the congruent operator $\tilde K(\lambda) = \rho(\lambda)^{1/2} K(\lambda) \rho(\lambda)^{1/2}$.
Therefore, $\rho(\lambda)K(\lambda)$ is positive  in view of the positivity of  $K(\lambda)$.
The second term can be treated in the same way.

It follows from claims (a) and (b) that positive isolated eigenvalues of $K(\lambda)$
are monotonically increasing functions of $\lambda$ from the zero level as $\lambda \to -\infty$.
The location and number of crossings of these eigenvalues with the unit level gives the location and
number of eigenvalues $\lambda$ in the spectral problem (\ref{tilde-L}). The compactness of $K(\lambda)$
for $\lambda < C_-(E)$ therefore implies that there exists a countable (finite or infinite)
set of isolated eigenvalues of $L$ below $C_-(E)$.
\end{proof}

Next, we inspect analytical properties of eigenvectors for isolated eigenvalues below $C_-(E) > 0$ given by (\ref{bound C-}).

\begin{lemma}
\label{lem-Eigenvector}
Under the condition of Lemma \ref{lem-L}, let $\lambda_0 < C_-(E)$ be an eigenvalue of the operator
$L$ given by (\ref{operator-L}). Then, $\lambda_0$ is at most double and
the eigenvector $v_0$ belongs to $\dot{L}^2_{\rm per}(-T(E),T(E)) \cap H^{\infty}_{\rm per}(-T(E),T(E))$.
\end{lemma}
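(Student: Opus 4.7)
The plan is to rewrite the eigenvalue equation $Lv = \lambda_0 v$ as an equivalent second-order linear ODE with smooth, uniformly positive coefficient, and then read off both the multiplicity bound and the regularity claim from standard ODE theory. Write $T = T(E)$ for brevity.

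First, I would unfold the definition of $L$. Since $v \in \dot{L}^2_{\rm per}(-T,T)$ satisfies $P_0 v = v$ and $\partial_z^{-2}$ preserves the zero-mean subspace, the equation $L v = \lambda_0 v$ takes the form
\[
\partial_z^{-2} v + (c - U^p - \lambda_0) v = m,
\]
where $m$ is a constant equal to the mean of $(c - U^p)v$ over $(-T,T)$. Setting $f := (c - U^p - \lambda_0) v \in L^2_{\rm per}(-T,T)$ and applying $\partial_z^2$ to the above identity yields the linear ODE
\[
\partial_z^2 f + \frac{f}{c - U^p - \lambda_0} = 0.
\]
The coefficient $1/(c - U^p - \lambda_0)$ is smooth and $2T$-periodic because $U \in H^{\infty}_{\rm per}$, and it is uniformly bounded away from zero and infinity thanks to the hypothesis $\lambda_0 < C_-(E)$, which gives $c - U^p - \lambda_0 \geq C_-(E) - \lambda_0 > 0$ by Lemma \ref{lem-L}.

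Next, I would run a short bootstrap argument for regularity. From $v \in L^2_{\rm per}$ we have $f \in L^2_{\rm per}$, and then $\partial_z^2 f = -v \in L^2_{\rm per}$ upgrades $f$ to $H^2_{\rm per}$; dividing by the smooth, nowhere-vanishing function $c - U^p - \lambda_0$ places $v$ in $H^2_{\rm per}$. Iterating, both $f$ and $v$ belong to $H^{\infty}_{\rm per}(-T,T)$, which is the claimed smoothness.

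For the multiplicity bound, I would set up a bijection between the $\lambda_0$-eigenspace of $L$ and the space of $2T$-periodic solutions of the ODE. Any classical $2T$-periodic solution $f$ produces $v := f/(c - U^p - \lambda_0) = -\partial_z^2 f$, which automatically has zero mean because $f$ is periodic; integrating back gives $\partial_z^{-2} v = -f + \bar{f}$, so the displayed identity holds with $m = \bar{f}$ and $v$ is an eigenvector. Conversely, every eigenvector yields such an $f$. Since the ODE is second-order linear, its full solution space is two-dimensional, and hence the subspace of $2T$-periodic solutions has dimension at most two. The hardest aspect is really just bookkeeping: one must verify that the zero-mean constraint on $v$ is automatically enforced by periodicity of $f$, so that no additional constraint can raise — or, more subtly, be misread as lowering — the dimension count below two. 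Once this is checked, both assertions of the lemma follow at once.
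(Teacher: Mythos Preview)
Your argument is correct and follows essentially the same route as the paper: rewrite the eigenvalue equation as an integral identity, differentiate twice to obtain the second-order ODE $v + \partial_z^2[(c-U^p-\lambda_0)v] = 0$ (which is exactly your equation for $f = (c-U^p-\lambda_0)v$), bootstrap for $H^\infty$-regularity, and invoke the two-dimensional solution space of a second-order linear ODE for the multiplicity bound. Your explicit treatment of the constant $m$ and the verification that periodicity of $f$ automatically enforces the zero-mean constraint on $v$ is a bit more detailed than the paper's, but the substance is the same.
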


\begin{proof}
As in the proof of the previous Lemma, we use the shorthand  $T = T(E)$ for lighter notation. The eigenvector $v_0 \in \dot{L}^2_{\rm per}(-T,T)$ for the eigenvalue $\lambda_0 < C_-(E)$
satisfies the spectral problem (\ref{tilde-L}) written as the integral equation
\begin{equation}
\label{integral-eq}
P_0 \partial_z^{-2}P_0 v_0 + P_0 (c-U^p-\lambda_0)P_0 v_0 = 0.
\end{equation}
Since $U \in H^{\infty}_{\rm per}(-T,T)$ and $c - U^p - \lambda_0 \geq C_-(E) - \lambda_0 > 0$,
we obtain that $v_0\in H^{2}_{\rm per}(-T,T)$, and by bootstrapping arguments we find that  $v_0 \in H^{\infty}_{\rm per}(-T,T)$. Applying two derivatives to the integral equation
(\ref{integral-eq}), we obtain the equivalent differential equation for
the eigenvector $v_0 \in \dot{L}^2_{\rm per}(-T,T) \cap H^{\infty}_{\rm per}(-T,T)$
and the eigenvalue $\lambda_0 < C_-(E)$:
\begin{equation}
\label{differential-eq}
v_0 +\partial_z^2 \left[ (c-U^p - \lambda_0) v_0 \right] = 0.
\end{equation}
The second-order differential equation (\ref{differential-eq}) admits at most two linearly independent solutions
in $\dot{L}^2_{\rm per}(-T,T)$ and so does the integral equation (\ref{integral-eq}) for an eigenvalue $\lambda_0 < C_-(E)$.
Since $L$ is self-adjoint, the eigenvalue $\lambda_0$ is not defective\footnote{Recall that
the eigenvalue is called defective if its algebraic multiplicity exceeds its geometric multiplicity.},
and hence the multiplicity of $\lambda_0$ is at most two.
\end{proof}

We are now ready to prove the main result of this section. This proves part (b) of Theorem~\ref{theorem-main}.

\begin{proposition}
\label{negative-index-L}
For every $c > 0$, $p \in \mathbb{N}$, and $E \in (0,E_c)$, the operator $L$
given by (\ref{operator-L}) has exactly one simple negative eigenvalue, a simple zero eigenvalue,
and the rest of the spectrum is positive and bounded away from zero.
\end{proposition}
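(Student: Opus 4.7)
The plan is to combine the structural results of Lemmas \ref{lem-L} and \ref{lem-Eigenvector} with the identities established in Sections \ref{section-monoton} and \ref{section-speed}. Since Lemma \ref{lem-L} reduces the spectrum of $L$ below the positive threshold $C_-(E)$ to a discrete set of isolated eigenvalues, it suffices to count this discrete part. I would organize the argument in three steps: (i) $\ker(L) = \mathrm{span}\{\partial_z U\}$, via the strict monotonicity $T'(E) < 0$ of the energy-to-period map; (ii) $L$ has at least one negative eigenvalue, via the identity $L \partial_c U = -U$ together with the positivity of $\langle \partial_c U, U \rangle_{L^2_{\rm per}}$; (iii) $L$ has exactly one negative eigenvalue, by continuation from the explicit zero-amplitude limit. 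The hardest step is (i), which is where Proposition \ref{P T'<0} enters decisively.

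For step (i), differentiating \eqref{E ODE} once in $z$ and integrating twice with zero mean gives $L \partial_z U = 0$. By Lemma \ref{lem-Eigenvector}, any kernel element solves the second-order ODE $v + \partial_z^2[(c-U^p) v] = 0$, whose solution space is two-dimensional. A second linearly independent solution is produced by differentiating the smooth family $U(z;E)$ of Remark \ref{R period} with respect to $E$. Differentiating the periodicity identity $U(z + 2T(E); E) = U(z;E)$ in $E$ and invoking the $2T$-periodicity of $\partial_z U$ yields the monodromy relation
\begin{equation*}
\partial_E U(z + 2T(E); E) - \partial_E U(z; E) = -2\, T'(E)\, \partial_z U(z; E),
\end{equation*}
whose right-hand side is nonzero for every $E \in (0, E_c)$ by Proposition \ref{P T'<0}. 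Hence $\partial_E U$ fails to be $2T$-periodic, so no second linearly independent kernel element exists in $\dot{L}^2_{\rm per}(-T,T)$, proving simplicity of the zero eigenvalue.

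For step (ii), the identity $L \partial_c U = -U$ from Lemma \ref{lem-increasing} yields
\begin{equation*}
\langle L \partial_c U, \partial_c U \rangle_{L^2_{\rm per}} = -\langle U, \partial_c U \rangle_{L^2_{\rm per}} = -\tfrac{1}{2} \tfrac{d}{dc} \|U\|^2_{L^2_{\rm per}} < 0,
\end{equation*}
so the Rayleigh quotient of $L$ takes a negative value on $\dot{L}^2_{\rm per}(-T,T)$, and the min-max principle provides at least one negative eigenvalue. For step (iii), I would continue the operator $L$ starting from the zero-amplitude limit, where $U \to 0$ and $L$ converges in norm to $L_0 = P_0(\partial_z^{-2} + c) P_0$, whose explicit spectrum \eqref{spectrum-zero-amplitude} consists of a double zero together with a discrete set bounded below by $3c/4$. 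Standard continuity of isolated eigenvalues of self-adjoint operators under norm perturbation then implies that the double zero of $L_0$ splits into two branches: one pinned to zero by translational symmetry and one which, by the variational bound of step (ii) combined with continuity, must emerge strictly negative, while all other eigenvalues remain strictly positive for small amplitude. Since step (i) applies for every $E \in (0, E_c)$, the zero eigenvalue remains simple along the entire continuation, so no eigenvalue can cross zero as the parameter varies; hence the count of one simple negative eigenvalue, one simple zero eigenvalue, and the rest strictly positive is preserved throughout $(0, E_c)$. That the positive spectrum is uniformly bounded away from zero follows because zero is isolated within the discrete spectrum below $C_-(E)$, and any spectrum at or above $C_-(E)$ is already bounded away from zero.
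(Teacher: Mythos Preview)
Your proof is correct and follows the same three-step architecture as the paper: simplicity of the kernel via $T'(E)\neq 0$, existence of a negative eigenvalue via a negative Rayleigh quotient, and preservation of the count by continuation from the zero-amplitude limit. The only substantive variation is your choice of test function in step~(ii): the paper uses $U$ itself, computing $LU = -\tfrac{p}{p+1}P_0 U^{p+1}$ directly from the integrated form of \eqref{E ODE} and then invoking Corollary~\ref{cor-increasing} to conclude $\int_{-T}^{T} U^{p+2}\,dz > 0$ (a step that is only nontrivial when $p$ is odd). Your use of $\partial_c U$ via Lemma~\ref{lem-increasing} is slightly more economical, since the slope condition $\langle \partial_c U, U\rangle_{L^2_{\rm per}} > 0$ already delivers the sign without the detour through Corollary~\ref{cor-increasing}; on the other hand, the paper's version has the virtue of producing the explicit inequality \eqref{negativity}, which makes transparent that the negative direction persists uniformly in $E$. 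Your monodromy formulation of step~(i) and the paper's boundary-condition formulation are equivalent rephrasings of the same Floquet obstruction.
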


\begin{proof}
Thanks to Lemma \ref{lem-L}, we only need to inspect the multiplicity of negative and zero eigenvalues
of $L$. By Lemma \ref{lem-Eigenvector}, the zero eigenvalue $\lambda_0 = 0 < C_-(E)$ can be at most double.
The first eigenvector $v_0 = \partial_z U \in \dot{L}^2_{\rm per}(-T(E),T(E)) \cap H^{\infty}_{\rm per}(-T(E),T(E))$ for $\lambda_0 = 0$ follows by the translational
symmetry. Indeed, differentiating (\ref{E ODE}) with respect to $z$, we verify that $v_0$ satisfies
the differential equation (\ref{differential-eq}) with $\lambda_0 = 0$ and, equivalently,
the integral equation (\ref{integral-eq}) with $\lambda_0 = 0$.

Another linearly independent solution $v_1 = \partial_E U$ of the same equation
(\ref{differential-eq}) with $\lambda_0 = 0$ is obtained by differentiating (\ref{E ODE}) with respect to $E$. Here we understand
the family $U(z;E)$ of smooth $2 T(E)$-periodic solutions constructed in Lemma~\ref{L ode sys},
where the period $2 T(E)$ is given by (\ref{E period function}) and is a smooth function of $E$.
Now, we show that the second solution $v_1$ is not $2 T(E)$-periodic under the condition $T'(E) < 0$
established in Proposition~\ref{P T'<0}. Consequently, the zero eigenvalue $\lambda_0 = 0$ is simple.
For simplicity, we assume that the family $U(z;E)$ satisfies the condition
\begin{equation}
\label{norming-condition}
U(\pm T(E);E) = 0
\end{equation}
at the end points, which can be fixed by translational symmetry.
By differentiating the first boundary condition in (\ref{E ODE}) with respect to $E$,
we obtain
\begin{eqnarray*}
\partial_E U(-T(E);E) - T'(E) \partial_z U(-T(E);E) = \partial_E U(T(E);E) + T'(E) \partial_z U(T(E);E).
\end{eqnarray*}
Notice that $\partial_z U(\pm T(E);E) \neq 0$, since otherwise the periodic solution $U$ would be identically zero in view of  (\ref{norming-condition}) which is only possible for $E = 0$. Since $T'(E) \neq 0$ by Proposition \ref{P T'<0},
the solution $v_1 = \partial_E U$ is not $2T(E)$-periodic and therefore the zero eigenvalue $\lambda_0 = 0$ is simple for the entire family of smooth $T(E)$-periodic solutions.

Next, we show that the spectrum of $L$ includes at least one negative eigenvalue. Indeed,
from the integral version of the differential equation (\ref{E ODE}),
$$
P_0 \left(c - \frac{1}{p+1} U^p \right)P_0 U + P_0 \partial_z^{-2} P_0 U = 0,
$$
we obtain that $L U = -\frac{p}{p+1} P_0 U^{p+1}$, which implies that
\begin{equation}
\langle L U, U \rangle_{L^2_{\rm per}} = -\frac{p}{p+1} \int_{-T(E)}^{T(E)} U^{p+2} dz < 0.
\label{negativity}
\end{equation}
The last inequality is obvious for even $p$. For odd $p$ it follows from Corollary \ref{cor-increasing} for given $T(E) \in (T_1 c^{1/2},\pi c^{1/2})$ fixed.
In both cases, we have shown that $L$ has at least one negative eigenvalue for every $E \in (0,E_c)$.

Finally, the spectrum of $L$ includes at most one simple negative eigenvalue. Indeed,
the family of $2 T(E)$-periodic solutions is smooth with respect to the  parameter $E \in (0,E_c)$
and it reduces to the zero solution as $E\to 0$. It follows from the spectrum (\ref{spectrum-zero-amplitude}) for the operator $L_0$ at the zero solution, and the preservation of the simple zero eigenvalue with the eigenvector
$\partial_z U$ for every $E \in (0,E_c)$, that the splitting of a double zero eigenvalue for $E \neq 0$
results in appearance of at most one negative eigenvalue of $L$.
Thus, there exists exactly one simple negative eigenvalue of $L$ for every $E \in (0,E_c)$.
\end{proof}

\section{Applications of the Hamilton--Krein theorem}
\label{section-Krein}

Since $L$ has a simple zero eigenvalue in $\dot{L}^2_{\rm per}(-T,T)$ by Proposition \ref{negative-index-L}
with the eigenvector $v_0 = \partial_z U$,
eigenvectors $v \in \dot{H}^1_{\rm per}(-T,T)$ of the spectral problem $\lambda v = \partial_z Lv$
for nonzero eigenvalues $\lambda$ satisfy the constraint $\langle U, v \rangle_{L^2_{\rm per}} = 0$,
see definition  (\ref{constraint})  of the space $L^2_c$.
Since $\partial_z$ is invertible in space $\dot{L}^2_{\rm per}(-T,T)$ and the inverse operator is bounded
from $\dot{L}^2_{\rm per}(-T,T)$ to itself, we can rewrite the spectral
problem $\lambda v = \partial_z Lv$  in the equivalent form
\begin{equation}
\label{eigenvalue-problem}
\lambda P_0 \partial_z^{-1} P_0 v = L v, \quad v \in \dot{L}^2_{\rm per}(-T,T).
\end{equation}
In this form, the Hamilton--Krein theorem from \cite{Haragus} applies directly in $L^2_c$. According to this theorem,
the number of unstable eigenvalues with $\lambda \notin i \mathbb{R}$ is bounded by the number of negative eigenvalues
of $L$ in the constrained space $L^2_c$. Therefore, we only need to show that the operator $L$ is positive in $L^2_c$
with only a simple zero eigenvalue due to the translational invariance in order to prove part (c) of Theorem \ref{theorem-main}.
The corresponding result is given by the following proposition.

\begin{proposition}
For every $c > 0$, $p \in \mathbb{N}$, and $E \in (0,E_c)$, the operator $L |_{L^2_c} : L^2_c \to L^2_c$,
where $L$ is given by (\ref{operator-L}), has a simple zero eigenvalue
and a positive spectrum bounded away from zero.
\label{positive-operator}
\end{proposition}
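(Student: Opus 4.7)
The plan is to use a standard constrained index-counting argument: since $L$ has exactly one simple negative eigenvalue by Proposition \ref{negative-index-L}, a single orthogonality constraint can remove it, and the sign of a certain bilinear form determines whether it does. The ingredients needed for this are precisely the ones prepared in Lemma \ref{lem-increasing}.

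First I would check that $\partial_z U \in L^2_c$: this holds because
\[
\langle U, \partial_z U \rangle_{L^2_{\rm per}} = \tfrac{1}{2} \int_{-T}^T \partial_z (U^2) \, dz = 0
\]
by $2T$-periodicity, so $\partial_z U$ remains in the kernel of $L|_{L^2_c}$ and the constrained operator has at least a simple zero eigenvalue. The same identity shows $U \perp \ker(L)$, so $L$ may be inverted on the orthogonal complement of its kernel. By Lemma \ref{lem-increasing} we have $L \partial_c U = -U$ with $\partial_c U \in \dot{L}^2_{\rm per}(-T,T)$, so, modulo a kernel component that is inert against $U$, one may take $L^{-1} U = -\partial_c U$. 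This yields the critical sign
\[
\langle L^{-1} U, U \rangle_{L^2_{\rm per}} = -\langle \partial_c U, U \rangle_{L^2_{\rm per}} < 0,
\]
once again thanks to Lemma \ref{lem-increasing}.

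The next step is to invoke the abstract result on negative indices under a single constraint (see e.g.~\cite{Pel,Haragus,Kollar2014}): for a self-adjoint operator with one simple negative eigenvalue and a simple kernel orthogonal to $U$, if $\langle L^{-1} U, U\rangle < 0$ then the restriction to $\{v : \langle U, v\rangle = 0\}$ has no negative eigenvalues and inherits the same simple kernel. Applied here this gives $L|_{L^2_c} \ge 0$ with $\ker(L|_{L^2_c}) = \mathrm{span}\{\partial_z U\}$. That the positive spectrum stays bounded away from zero then follows because $L$ has compact resolvent and purely discrete spectrum by Lemma \ref{lem-L}, and restriction to a closed codimension-one subspace preserves both properties. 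The main obstacle is verifying cleanly the hypotheses of the constrained-index theorem: one must ensure $U \in \mathrm{Range}(L)$ (provided by $L \partial_c U = -U$), that $U \perp \ker(L)$ (by periodicity), and that the ambiguity in $L^{-1} U$ up to a kernel element does not affect $\langle L^{-1} U, U\rangle$; with these three points settled, the remaining argument is a routine application of the Courant min-max principle.
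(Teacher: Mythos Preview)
Your proposal is correct and follows essentially the same route as the paper: both arguments reduce to verifying the sign condition $\langle L^{-1}U, U\rangle_{L^2_{\rm per}} < 0$ via $L\partial_c U = -U$ and the monotonicity $\langle \partial_c U, U\rangle_{L^2_{\rm per}} > 0$ from Lemma~\ref{lem-increasing}, then invoke the standard constrained-index criterion. The only cosmetic difference is that the paper fixes the representative of $L^{-1}U$ by appealing to the opposite parity of $\partial_c U$ and $\partial_z U$, whereas you observe (equivalently) that any kernel component is annihilated when paired with $U$.
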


\begin{proof}
The proof relies on a well-known criterion (see for example Lemma~1 in \cite{Hakkaev2} or Theorem~4.1~in~\cite{Pelinovsky2011})
which ensures positivity of the self-adjoint operator $L$ with properties obtained in Proposition~\ref{negative-index-L},
when it is restricted to a co-dimension one subspace. Positivity of $L |_{L^2_c} : L^2_c \to L^2_c$
is achieved under the condition
\begin{equation}
\label{to-be-shown}
\langle L^{-1} U,U\rangle_{L^2_{\rm per}} <0.
\end{equation}
To show (\ref{to-be-shown}), we observe that  ${\rm Ker}(L) = {\rm span}\{ v_0\}$, where $v_0=\partial_z U$
and $\langle U, v_0 \rangle_{L^2_{\rm per}}  = 0$ implies that $U \in {\rm Ker}(L)^{\perp}$.
By Fredholm's Alternative (see e.g.~Theorem~B.4~in~\cite{Pelinovsky2011}), $L^{-1} U$ exists in
$\dot{L}_{\rm per}^2(-T,T)$ and can be made unique by the orthogonality condition $\langle L^{-1} U, v_0 \rangle_{L^2_{\rm per}} = 0$.
By Lemma~\ref{lem-increasing}, we have the existence of $\partial_c U \in \dot{L}_{\rm per}^2(-T,T)$ such that
$L \partial_c U =-U$, see equation (\ref{inhom-partial-c}). Moreover,
$\langle \partial_c U, v_0 \rangle_{L^2_{\rm per}} =0$, since $\partial_c U$ and $v_0=\partial_z U$ have opposite parity.
Therefore, $\partial_c U = L^{-1} U$ and we obtain
$$
\langle L^{-1} U,U\rangle_{L^2_{\rm per}}  = - \langle \partial_c U,U \rangle_{L^2_{\rm per}}  <0,
$$
where the strict negativity follows from Lemma~\ref{lem-increasing}.
\end{proof}

The proof of Theorem \ref{theorem-main} follows from the results of
Propositions \ref{P T'<0}, \ref{negative-index-L}, and \ref{positive-operator}.

\section*{Acknowledgements}
A.G.~is supported by the Austrian Science Fund (FWF) project J3452 ``Dynamical
Systems Methods in Hydrodynamics''. The work of D.P.~is supported by the Ministry of Education
and Science of Russian Federation (the base part of the State task No.~2014/133, project No.~2839).
The authors thank Todd Kapitula (Calvin College) for pointing out an error in an early version of this manuscript.

\end{document}